\documentclass[10pt,bookmarks]{article}

\usepackage{import}
\usepackage{commands}
\usepackage{constants}
 
%\captionsetup[subfigure]{justification=centering, margin={0.7cm,0cm}}
% Mesh regularity parameters
\newconstantfamily{regularity}{
  symbol=\varrho,
  format=\arabic
}

% Discrete functional analysis parameters
\newconstantfamily{dfan}{
  symbol=C,
}

% Coercivity parameters
\newconstantfamily{coercivity}{
  symbol=\gamma,
  format=\arabic
}

%\input{macro}

% Colors

\definecolor{IFPENGreenn}{rgb}{0.66,    0.78,    0}

\definecolor{IFPENOrange}{rgb}{1.0, 0.56, 0.0}

\newcommand{\erased}[1]{}

%%%%%%%%%%%%%%%%%%%%%%%%%%%%%%%%%%%%% this command is used to define the energy norm
\newcommand{\vertiii}[1]{{\left\vert\kern-0.25ex\left\vert\kern-0.25ex\left\vert #1
    \right\vert\kern-0.25ex\right\vert\kern-0.25ex\right\vert}}
%%%%%%%%%%%%%%%%%%%%%%%%%%%%%%%%%%%%%%%%%%
\newcommand{\matM}{\boldsymbol{M}}
\newcommand{\matA}{\boldsymbol{A}}
\newcommand{\vecp}{\boldsymbol{p}}
\newcommand{\vecb}{\boldsymbol{b}}
\newcommand{\vece}{\boldsymbol{e}}
\newcommand{\vecr}{\boldsymbol{r}}

\newcommand{\vecepsi}{\boldsymbol{\varepsilon}}
\newcommand{\vecrho}{\boldsymbol{\varrho}}
\newcommand{\scas}{s}

\newcommand{\vecw}{\boldsymbol{w}}
\newcommand{\vecv}{\boldsymbol{v}}

\newcommand{\bG}{\boldsymbol{G}}

%-----------------------------------------------------------------

\newcommand{\R}{\mathbb{R}}

\newcommand{\WellIndex}{\mathtt{WI}}
\newcommand{\EnergyPercent}{\mathtt{E}}
\newcommand{\COO}{CO${}_2$}

 % for checkmark

%------------------------------------------------------------------
% mathcal symbol
\newcommand{\calN}{\mathcal{N}}

\newcommand{\sfN}{\mathsf{N}}

%%%%%%%%%%%%%%%%%%%%%%%%%%%%%%%%%%%%%%%%%%%%%%%%%%%%%%%%%

\newcommand{\calE}{\mathcal{E}}

\newcommand{\calL}{\mathcal{L}}
\newcommand{\calP}{\mathcal{P}}

\newcommand{\calT}{\mathcal{T}}

\newcommand{\calS}{\mathcal{S}}
\newcommand{\calM}{\mathcal{M}}
\newcommand{\calR}{\mathcal{R}}

\newcommand{\dsp}{\displaystyle}
\newcommand{\n}{\mathbf{n}}
\newcommand{\bLambda}{\mathbf{\Lambda}}
\newcommand{\vectmat}[1]{\mathbf{#1}}

\newcommand{\revOne}[1]{{\textcolor{black}{#1}}}

%------------------------------------------------------------------------------%

\theoremstyle{plain}

\theoremstyle{remark}

\newcommand{\mean}[2]{\langle #1\rangle_{#2}}

%%%%%%%%%%%%%%%%%%%%%%
\newcommand{\st}{s.t.\@\xspace}

\newcommand{\eqbydef}{\stackrel{\mathrm{def}}{=}}

\DeclareMathOperator{\opmeas}{m}

\DeclareMathOperator{\opint}{int}
\DeclareMathOperator{\opext}{ext}
\DeclareMathOperator{\opd}{d}

\newcommand{\meas}[1]{\opmeas_{#1}}

\newcommand{\closure}[1]{\overline{#1}}

\newcommand{\ud}{{\,\opd}}

\numberwithin{equation}{section}

 %%%%%%%%% boyaval
\newcommand{\JTar}[1]{\textcolor{blue}{#1}} %%%%%%%%% Jana
\newcommand{\huy}[1]{\textcolor{purple}{#1}} % huy

\begin{document}

%\title{Goal-oriented reduction of porous media flows}
\title{Reduced Basis method for finite volume simulations of parabolic PDEs \\
applied to porous media flows}

\author{Jana \textsc{Tarhini}\thanks{IFP Energies nouvelles, 1 et 4 avenue de Bois Pr\'eau, 92852 Rueil-Malmaison Cedex, France. \texttt{jana.tarhini@ifpen.fr}, \texttt{guillaume.enchery@ifpen.fr}, \texttt{quang-huy.tran@ifpen.fr}}
\and
Sébastien \textsc{Boyaval}\thanks{Laboratoire d’hydraulique Saint-Venant, École des Ponts, EDF R\&D, 6 quai Watier, 78401 Chatou Cedex, France \& Matherials, Inria, Paris, France. \texttt{sebastien.boyaval@enpc.fr}} %  \texttt{sebastien.boyaval@inria.fr}
\and
Guillaume \textsc{Ench\'ery}${}^*$
\and
Quang-Huy \textsc{Tran}${}^*$
}
\date{\today}

\maketitle

\begin{abstract}
% -- MOTIVATION : really useful in abstract ?
Numerical simulations are a highly valuable tool to evaluate the impact of the uncertainties of various model parameters, and to optimize e.g. injection-production scenarios in the context of underground storage (of {\COO} typically). % quantities of interests (fluxes along internal boundaries,  well flow rates...).
Finite volume approximations of Darcy's parabolic model for % single phase
flows in porous media are typically run many times, for many values of parameters like permeability and porosity, at costly computational efforts. % in particular because in practice the dimensions of the simulated space domain is extended far beyond the zone of interest in order to quantify the impact of the storage on the nearby environment.
% The reduced basis method can significantly lower the overall simulation cost.

% -- ACTUAL WORK: essential
We study the relevance of reduced basis methods as a way to lower the overall simulation cost of finite volume approximations to Darcy's parabolic model for % single phase
flows in porous media  % \SBoy{in a large space domain (beyond the storage zone)}
for different values of the parameters such as permeability. In the context of underground gas storage (of {\COO} typically) in saline aquifers, our aim is to evaluate quickly, % the time evolution of 
for many parameter values, the flux along some interior boundaries near the well injection area---regarded as a quantity of interest---. To this end, we construct reduced bases % (for both primal and dual problems) by means of
by a standard POD-Greedy algorithm. % By constrat with standard applications to parabolic models, 
Our POD-Greedy algorithm uses a new goal-oriented error estimator designed % at the discrete level for the time-dependent reduced model\huy{,} 
from a discrete space-time energy norm independent of the parameter.
We provide some numerical experiments that validate the efficiency of the proposed estimator.
\end{abstract}
{\small
\begin{center}
\textbf{Keywords}\medskip\\
single-phase flow, porous media, finite volumes, reduced basis, goal-oriented error estimate
\end{center}
\begin{center}
\textbf{Mathematics subject classification}\medskip\\
35J50, 65M08, 65N15, 76S05
\end{center}
} 

\section{Introduction}

In the context of geological storage of gases such as {\COO}, computational models of single phase Darcy flow are useful to optimize the efficiency of injection, and to quantify uncertainties with a view to assessing the enduring stability of the storage site. As concerns {\COO}, it is usually injected in underground storage sites such as depleted oil and gas reservoirs or saline aquifers in sedimentary basins. In this work, we are mainly interested in the case of saline aquifers.

In saline aquifers, the numerical simulation of \emph{single phase} Darcy flows is very meaningful, in particular in a large domain at basin scale where it is computationally costly. Indeed, brine is moved by the gas ({\COO}) injected outside the storage area. In risk assessment studies, one needs to evaluate the pressure field in the surrounding aquifer (typically along faults far from the storage domain) \emph{many times, for many values of the uncertain parameters }. Quantifying the impact of the uncertainties of model parameters, on the time evolution of the flux at underground boundaries of the storage area, %  where, unlike reservoir simulation, homogeneous Neumann boundary conditions for the flow cannot be used along the storage area 
is also desired to optimize the injection process. % in the storage zone
For both purposes and given values of the model parameters, the flow simulator computes the solution of a large linear system many times. This multi-query setting induces costly computational efforts especially for large domains. To lower the overall simulation cost of computations at many parameter values, we consider a Reduced Basis (RB) approach. 

Many other methods have been proposed to reduce the time calculations in basin modeling and reservoir simulation. For instance, in case of LGR methods, the grid is locally refined only in a region of interest depending on the local solution properties, whereas in other areas, where the solution is relatively smooth or uniform, grid cells can be larger, leading to a smaller computational cost. Likewise, the Adaptive Mesh Refinement (AMR) method \cite{verfurth1994posteriori,berger1984adaptive} divides the computational domain into a hierarchy of grids and each grid is refined or coarsened by considering an error estimate as the simulation progresses.

In this work, our approach consists rather in considering a reduced basis (RB) approach to replace many calls to a parametrized High-Fidelity (HF) simulator at many parameter values, by calls to a less expensive Low-Fidelity (LF) surrogate model with a certification of the error.

\bigskip

A RB procedure relies on an existing computational model, a parametrized HF simulator which can provide one with numerical approximations of the model solutions at fixed parameter values. 
When the HF model consists in large linear systems (one at each time step in a nonstationary flow simulation e.g.), a LF (reduced) computational model is usually constructed by Galerkin projection of the HF model onto a linear subspace. 

%%%%%%%%%%%%%%%%%%%%%%%%%%%%%%%%%%%%%%%%
In the context of porous media flows, the choice of the appropriate numerical scheme to discretize the governing equations in space is crucial to obtain a consistent approximation of the fluxes.
Finite-difference \cite{sammon1988analysis, brenier1991upstream}, finite-volume \cite{eymard2000finite} or finite-element \cite{chavent1986mathematical, chen2001degenerate} methods have been classically used in industrial contexts such as reservoir engineering. In particular the finite-volume two-point flux approximation is a reference method in this field because of its simplicity and its stability properties (the discrete operator turns out to be an M-matrix). However, once the grids are not $\bLambda$-orthogonal \footnote{A grid is $\bLambda$-orthogonal if the product of the permeability tensor $\bLambda$ with the face normal is orthogonal to the line joining the cell and face centers.}, this scheme is no more consistent. Over the past years, new discretization methods have been proposed to satisfy this property: multi-point flux approximations \cite{aavatsmark1994discretization}, mimetic finite-differences \cite{brezzi2005family}, virtual elements \cite{BeiraoEtAl2013BasicsOfVEM}, hybrid \cite{sushi} or vertex-centred finite-volumes \cite{eymard2012vertex} to quote just a few of them.
We also mention non-linear schemes \cite{le2005schema,lipnikov2009interpolation,schneider2017convergence} that were designed in order to obtain monotone approximations and properties such as the positivity of the solutions or the maximum principle on these grids. In this work, we consider the average multi-point flux approximation (MPFA-FV) method which was for instance studied in \cite{schneider2017convergence}. That % linear 
approximation does not preserve the positivity of the solutions or the maximum principle, but it is consistent on grids that are not $\bLambda$-orthogonal.
%%%%%%%%%%%%%%%%%%%%%%%%%%%%%%%%%%%%%

In the present work, given a parametrized HF model that discretizes Darcy's parabolic model by a MPFA-FV method resulting in a time-series of (large) linear systems, see Section~\ref{refScheme}, we then standardly construct a parametrized LF model by projecting (the pressure field solution to) Darcy flows at each time step, whatever the parameter value, onto one (single) linear subspace by Galerkin method. 

% a discussion (about that choice) could be here
To that aim, we adopt a standard two-stage procedure \cite{grepl,Haasdonk2008}.
First, in a costly offline stage, we identify a linear approximation space spanned by (snapshots % in time
of) simulations at relevant parameter values. During this stage, HF simulations with many degrees of freedom $\mathcal{N}$ (several thousands of cells) are run at least $\mathsf{N}$ times, $\mathsf{N}$ being the dimension of the linear approximation space. % (about tens of solutions).
During the offline stage, a LF \emph{reduced} computational model is also numerically constructed. % (by Galerkin projection)
Next, in an online stage, the values of solutions and the quantities of interests at yet-unexplored % new
parameter values are evaluated numerically using the LF (reduced) computational model, if possible with a computational complexity independent from $\mathcal{N}$. 

The offline selection of a good linear subspace for Galerkin projection is crucial to the quality of the LF model, i.e. to control % bound above ?
the approximation error of the LF model with respect to the HF model at every parameter values. % In a parabolic framework 
Regarding the applications of the RB method to parametrized HF simulator that are based on (finite-volume discretizations of) % LTI
parabolic PDEs, a standard selection technique is the POD-Greedy method based on a reliable a posteriori error estimation, see e.g. \cite{Haasdonk2008}.

In the present work, we propose new a posteriori error estimators. Like e.g. \cite{Haasdonk2008}, our a posteriori error estimators evaluate the approximation by a LF model of a HF model discretizing parabolic PDEs by the finite volume method (a MPFA-FV discretization in our application case to single phase Darcy flows). However, by contrast with standard RB literature, we use a discrete space-time energy norm of $L^2([0,T];H^1(\Omega))$-type % reminiscent of 10.2307/24488252 ? 
that is \emph{independent from the parameter}. We also provide goal-oriented estimators for a linear QOI, % using a dual problem
like in \cite{grepl,M2AN_2005__39_1_157_0} (where a parametrized HF model based on finite element approximations is reduced using another algorithm than POD-Greedy to construct the LF model). % nguyen2010reduced
%\SBoy{[*** Discuss other possible approaches: use a time-dependent projection space as in Billaud-Friess and Nouy 2017 or Nobile ? % Use a more standard "variational/FE" approach for reduction with the "standard" estimator "on the side" (non-intrusive reduction as in Chakir or Grosjean) next reconstruct ? (with error control on reconstruction ?)
%***]}

\bigskip

To assess the accuracy of our error estimate, and the performance of our new certified RB approach (i.e. the dimension of the reduced LF model in a multi-query scenario for monophasic Darcy flows parametrized by the permeability), we also perform numerical simulations.

% In this work
Our HF model based on MPFA-FV discretization is non-affine in the permeability parameter: we use the Empirical Interpolation Method (EIM) \cite{Maday2016ConvergenceAO,chaturantabut2010nonlinear} for the construction of a LF model % with a computational complexity 
independent from $\mathcal{N}$.
Moreover, to construct a lower bound of the coercivity constant (required by a rigorous error estimation) % with an operation independent from the dimension $\mathcal{N}$
we use the Successive Constraint Method (SCM) \cite{CHEN20081295,HUYNH2007473}.
It is noteworthy that, for accurate a posteriori error estimation % in practice
taking care of machine precision, % for the square root of the residual dual norm
we numerically compute quadratic-in-the-parameter forms (in the dual norm of the residual) as in \cite{Buhr2014ANS}, using a specific orthonormal basis to represent the residual, while a traditional offline/online decomposition \cite{canuto2009posteriori} leads to numerical precision issues.

Our numerical results show that the proposed a posteriori error estimation guarantees a reliable evaluation of a single-phase Darcy flow model and accurately quantify the solution error. We also proved that the goal-oriented estimation for a given linear QOI offers for a small dimension of the reduced model a very good precision for the output error.

\bigskip

This paper is outlined as follows. 
In Section \ref{refScheme}, we present a discretization of single-phase flow (SPF) equations in porous media based on the multi-point flux approximation that defines our HF numerical model. 
Section \ref{RB} discusses the concept of a goal-oriented method applied to the SPF problem. We derive the a posteriori error estimation for the primal and dual problems as well as for the output model. We also present the POD-Greedy algorithm to construct the reduced basis. 
In Section \ref{CompAspect}, we additionally elaborate on the computation of the residual dual norm in order to avoid the impact of round-off errors on the error bound. 
We then numerically study the behavior and efficiency of the proposed estimate in Section \ref{NumRes}. Finally, concluding remarks are given in Section \ref{Conclusion}.

\section{A parametrized High-Fidelity model %of single phase porous media flows
}\label{refScheme}

In this section, we introduce the PDE modelling of single phase porous media flows, and its discretization by a finite volume method which defines our parametrized HF simulator in the sequel.

\subsection{A Darcy model of single phase porous media flows % for \COO storage...
}\label{sec:darcy}

We consider the flow of a slightly compressible 
fluid saturating a porous rock within a connected and bounded polygonal domain $\Omega$ of $\mathbb{R}^3$ \JTar{and a time $T>0$}. The boundary $\partial\Omega=\Gamma_{\JTar{\rm D} }\cup \Gamma_{\rm N} $ of $\Omega$ is partitioned into a part where Dirichlet boundary conditions are applied, and a part where homogeneous Neumann boundary conditions are used.

The balance of the water volume combined with Darcy's law and with initial and boundary data leads to
\begin{subequations}\label{SF}
\begin{align}
\phi c_t\partial_t p -\nabla \cdot (\bLambda({\nabla} p +\rho g{\nabla} z))& = q, &\mbox{ in } & (0,T)\times \Omega,\\
\bLambda({\nabla} p+\rho g {\nabla} z) \cdot \n & = 0, & \mbox{ on } & (0,T)\times \Gamma_{\mathrm{N}},\\
 p & = p_{\mathrm{D}}, & \mbox{ on } & (0,T)\times \Gamma_{\mathrm{D}}, \label{eq:dirichlet}\\
 p(x,t=0) & =p^0(x), & \mbox{ in } & \Omega,
\end{align}
\end{subequations}
where $p$ denotes the fluid pressure,
$\bLambda=\overline{\kappa}/\mu$ the mobility tensor, $\overline{\kappa}$ the rock permeability tensor, $\mu$ the fluid viscosity, $\phi$ the rock porosity, $c_t$ the total compressibility, $\rho$ the fluid density, $g$ the gravity constant and $q(p)$ a well source term to be precised later (in Section~\ref{sec:MPFA} after discretization).  
We designate by $\n$ the unit normal vector outside the domain.

A typical multi-query setting (with parameter variations to be addressed by the RB method), occurs when the permeability tensor $\overline{\kappa}$ is uncertain. We here assume that the domain is made up of two areas, corresponding to two rock types (a reservoir one and a cap rock) each with constant isotropic permeabilities $\overline{\kappa}_1$ or $\overline{\kappa}_2$ (see e.g. Fig.~\ref{fig:domain}) so that
 \[
 \bLambda = 
  \begin{bmatrix}
    \Lambda & 0 & 0\\
    0 &  \Lambda & 0 \\
    0 & 0 & \Lambda
  \end{bmatrix}
\]  
where $\Lambda(x)= \sum_{i=1}^2 \frac{\kappa_i}{\mu} \mathbb{1}_{(i)}(x)$ and $\mathbb{1}_i$ denotes the indicator function of the region $i$.
\begin{figure}[H]
    \centering
    \hspace*{-1.5cm}
    \includegraphics[scale=0.8]{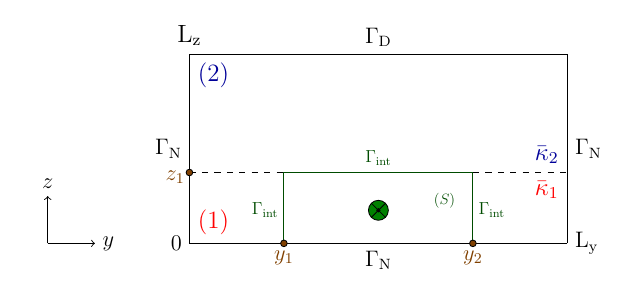}
    \caption{Typical domain configuration.}
    \label{fig:domain}
\end{figure}
Throughout this work, we consider the storage area $(S)$ with boundaries
$$\Gamma_{\rm int}=\{y_1\}\times[0,z_1] \cup\{y_2\}\times[0,z_1]\cup [y_1,y_2]\times\{z_1\}, $$
 where we seek to predict the time evolution of the flux $\scas$ defined  by
\begin{equation}\label{QOI}
\scas= -\int_{\Gamma_{\rm int}} \bLambda({{\nabla}} p +\rho {g} {\nabla} z)\cdot \boldsymbol{\n} \, \mathrm{d}S
\end{equation}
over $\Gamma_{\rm int}$ for many values of $\kappa_1$ and $\kappa_2$.

\subsection{Finite volume discretization}
\label{sec:MPFA}

The single-phase flow equations are first discretized in time using the implicit Euler method, next in space using a finite volume method. Choosing a constant time step
$$ \Delta t = \frac{T}{N}, \quad N \in \mathbb{N}^* \,, $$
we consider at each time iteration $n \in \{0,\ldots,N-1\}$ an approximation $p^{n+1} \approx p(t^{n+1})$ at $t^{n+1}=(n+1)\;\Delta t$ solution to
\begin{subequations}\label{eq::subequs}
\begin{align}
\phi c_t\frac{p^{ n+1}-p^{ n}}{\Delta t}-\nabla \cdot \left( \bLambda(\nabla p^{ n+1} +\rho  g\nabla z)\right)& = q^{n+1}, \label{subequ:SP}\\
\bLambda(\nabla p^{ n+1} +\rho g \nabla z) \cdot \n & = 0, \\
 p^{ n+1} & = p_{\mathrm{D}}.
\end{align}
\end{subequations}

The space discretization is performed using an \emph{admissible mesh} of $\Omega$, defined by a triplet $\mathcal{D}=(\mathcal{T},\mathcal{E},\mathcal{P})$ where : % in Definition \ref{DefAdmissMesh}.
% \begin{definition}[Admissible mesh discretization]\label{DefAdmissMesh}
\begin{itemize}
\item  $\mathcal{T}$ is a finite set of non empty compact convex polygonal sub-domains of $\Omega$ (the set of cells), called control volumes such that $\overline{\Omega}=\bigcup\limits_{K \in \mathcal{T}}\overline{K}$. For all $K \in \mathcal{T}$, we denote by $\meas{K}>0$ its measure and set $\partial K\eqbydef \closure{K}\setminus K$.
\item $\mathcal{E}$ is a family of subsets of $\overline{\Omega}$ (the set of faces) such that for any $K\in \mathcal{T}$, there exists a subset $\mathcal{E}_K$ of $\mathcal{E}$ where $ \partial K=\bigcup\limits_{\sigma \in \mathcal{E}_K} \sigma$. For any $(K,L)\in\mathcal{T}^2$ with $K \neq L$, either the $(d-1)$ Lebesgue measure of $\overline{K}\cap \overline{L}$ is 0 or $\overline{K}\cap \overline{L}=\overline{\sigma}$ for some $\sigma \in \mathcal{E}$, with $\sigma=K|L$ (an interior face). We denote by $\meas{\sigma}$ the $(d-1)$-dimensional measure of $\sigma$. The sets of inner and boundary faces are denoted by $\calE_{ \opint}$ and $\calE_{\opext}$ respectively.
\item  $\calP=\lbrace \vectmat{x}_K\rbrace_{K \in \calT}$ is a collection of points within   $\Omega$ indexed by $\calT$ (called the \emph{cell centers}, not required to be the barycenters) \st $\vectmat{x}_K\in K$ and $K$ is star-shaped with respect to $\vectmat{x}_K$. 
\end{itemize}
For each cell $K\in \calT$ and face $\sigma \in \calE_K$, $\n_{K,\sigma}$ denotes the unit vector normal to $\sigma$ and pointing outward to $K$.
Additionally, for any cell $K\in\calT$ and any function $\Phi$ belonging to $ L^1(K)$, we define %the mean value 
$\mean{\Phi}{K}\eqbydef \meas{K}^{-1}\int_K\Phi \ud x$.
%\end{definition}

Given an admissible mesh, numerically computable approximations $p^n_K\approx \mean{p^n}{K}$ are defined after space discretization by a finite volume method.
We first integrate \eqref{subequ:SP} over a cell $K$ to obtain
\begin{equation}\label{eq:bilanintK}
    \int_{K}\phi c_t\frac{p^{ n+1}-p^{ n}}{\Delta t}\, \ud x - \int_{K} \nabla \cdot ( \bLambda(\nabla p^{ n+1}+\rho g \nabla z) )\, \ud x=\int_{K} q^{n+1} \, \ud x.
\end{equation}
Applying Green's formula, we can transform the first two integrals and recast \eqref{eq:bilanintK} as
\begin{equation}\label{eq:afterGreen}
\meas{K}\phi_K c_t\frac{\mean{p^{n+1}}{K}-\mean{p^n}{K}}{\Delta t} - \int_{\partial K}\bLambda(\nabla p^{ n+1}+\rho g \nabla z) \cdot \n_K \, \ud\mathcal{\gamma}=\int_{K} q^{n+1} \, \ud x .
\end{equation}
By decomposing the boundary $\partial K$ into faces, we get
$$  \meas{K} c_t \phi_K \frac{\mean{p^{n+1}}{K}-\mean{p^n}{K}}{\Delta t} -\sum_{\sigma \in \calE_K} \int_{\sigma}\bLambda(\nabla p^{ n+1}+\rho g\nabla z) \cdot \n_{K,\sigma} \,  \ud S= \int_{K} q^{n+1} \, \ud x, $$ % which can be written, with self-explanatory notations, as
which leads one to consider the following numerical scheme
\begin{equation}\label{AssembeledEqu}
 \meas{K}\phi_K c_t  (p_K^{n+1}-p_K^{n} ) +   \Delta t \sum_{\sigma \in \calE_K} F_{K,\sigma}^{n+1}= \Delta t \meas{K} q_K^{n+1}
\end{equation}
with numerical fluxes $F_{K,\sigma}^{n+1}\approx -\int_{\sigma}\bLambda(\nabla p^{ n+1}+\rho g\nabla z) \cdot \n_{K,\sigma}\,\ud S$ and numerical source terms $ q_K^{n+1}$ that allow for the numerical computation of $(p_K^{n+1})_{K\in\calT}$ given $(p_K^{n})_{K\in\calT}$.

The Peaceman model \cite{Chen2007ReservoirS} is used here for the well source term $q_K^{n+1} = \meas{K}^{-1}\int_{K} q^{n+1} \, \ud x$. We suppose that the well is vertical and the perforations are oriented in the z-direction. The well model is then given by
\begin{equation}
 q_K^{n+1}=  \WellIndex_K (p_{bh}-p_K^{n+1}-\rho {g}(z_{bh}-z_K) ),   
\end{equation}
where $p_{bh}$ is the bottom hole pressure and $\WellIndex_K$ is the Peaceman well index\footnote{
We adopt here a usual definition of the well index \cite{Chen2007ReservoirS} 
\begin{equation*}\label{eq:wellindex}
    \WellIndex=\frac{2\pi  h_3 \sqrt{\lambda_1 \lambda_2}}{\text{ln}(r_e/r_w)+s_d }
\end{equation*}
when 
\[
 \bLambda = 
  \begin{bmatrix}
    \lambda_1 & 0 & 0\\
    0 &  \lambda_2 & 0 \\
    0 & 0 & \lambda_3
  \end{bmatrix}
\]  
$h_3$ is the perforation height, $r_w$ is the well radius, $s_d$ is the skin factor i.e. a dimensionless number modeling the formation damage caused by drilling, and $r_e$ is the Peaceman radius defined as 
$$ r_e= \frac{0.14 [ (\lambda_2/\lambda_1)^{1/2} h_1^2 + (\lambda_1/\lambda_2)^{1/2} h_2^2  ]^{1/2}}{0.5 [ (\lambda_2/\lambda_1)^{1/4} +(\lambda_1/\lambda_2)^{1/4} ]}, $$
 where $h_1$ and $h_2$ are the grid sizes in $x$ and $y$ directions.
} in a perforated cell $K$. 

The flux -$\int_{\sigma}\bLambda(\nabla p^{ n+1}+\rho g\nabla z) \cdot \n_{K,\sigma} \,\ud S$ is numerically approximated using the average multi-point flux scheme studied in \cite{schneider2017convergence}.
For each interior edge $\sigma\in {\calE}_{\rm int}$, with $\calT_\sigma=\{K,L\}$ the approximated flux $F_{K,\sigma}^{n+1}$ is defined as a convex combination of two linear fluxes $\tilde{F}_{K,\sigma}^{n+1}$ and $\tilde{F}_{L,\sigma}^{n+1}$ such that
\begin{equation}
\label{eq:fluxApprox}
F_{K,\sigma}^{n+1}=  \mu_{K,\sigma} \tilde{F}_{K,\sigma}^{n+1}-\mu_{L,\sigma} \tilde{F}_{L,\sigma}^{n+1}, \qquad
\text{ with }\;\; \mu_{K,\sigma}\geq 0, \quad \mu_{L,\sigma}\geq 0, \quad \mu_{K,\sigma}+\mu_{L,\sigma}=1.
\end{equation}
A numerical flux formula as given by \eqref{eq:fluxApprox} is clearly conservative, i.e, 
\begin{equation}
F_{K,\sigma}^{n+1}+F_{L,\sigma}^{n+1}=0.
\label{conservation_flux_nonlinear}
\end{equation}

To build the linear fluxes $ \tilde{F}_{K,\sigma}^{n+1}$ in \eqref{eq:fluxApprox}, we approximate the pressure gradient $\nabla p$ in the direction of the conormal vector $\mean{\bLambda}{K}\n_{K, \sigma}$ after expressing the conormal as a linear combination of the vectors $(\mathbf{x}_{\sigma^{\prime}} - \mathbf{x}_{K} )_{\{\sigma'\in\calS_{K, \sigma}\}}$
\begin{equation}
\mean{\bLambda}{K}\n_{K, \sigma}\approx\sum_{\sigma^\prime \in \calS_{K, \sigma}} \alpha_{K,\sigma\sigma^\prime} (\mathbf{x}_{\sigma^{\prime}} - \mathbf{x}_{K})%, \text{ with } \, \calS_{K, \sigma} \eqbydef \lbrace \sigma^\prime\in\calE_K \, \alpha_{K,\sigma\sigma^\prime} \not = 0 \rbrace
.
\label{eq:finalConormalDec}
\end{equation}
The decomposition \eqref{eq:finalConormalDec} is achieved numerically by means of an optimization procedure which aims at reducing the sum of the coefficients $\alpha_{K,\sigma\sigma^\prime}$ and the size of the stencil $\calS_{K, \sigma} \eqbydef \lbrace \sigma^\prime\in\calE_K \, \alpha_{K,\sigma\sigma^\prime} \not = 0 \rbrace$ within $\calE_K$  \cite{schneider2018monotone}.
%We refer to \cite{schneider2018monotone} for more details on this procedure. 
In \eqref{eq:finalConormalDec}, $\mathbf{x}_{\sigma}$ is not the face center but an harmonic averaging interpolation point
\begin{equation}
\label{eq:xsigma}
    \dsp \mathbf{x}_{\sigma} = \omega_{K,\revOne{\sigma}} \mathbf{y}_K + \omega_{L,\revOne{\sigma}} \mathbf{y}_L  +  \frac{d_{K,\sigma} d_{L,\sigma}}{d_{L,\sigma} \tau_{K,\sigma} + d_{K,\sigma} \tau_{L,\sigma}}(\boldsymbol{\tau}_K^{\sigma}-\boldsymbol{\tau}_L^{\sigma})
\end{equation}
where % $\omega_{K,\revOne{\sigma}}$, $\omega_{L,\revOne{\sigma}}$ are given by
\begin{equation}
 \omega_{K,\revOne{\sigma}} = \frac{d_{L,\sigma} \tau_{K,\sigma}}{d_{L,\sigma} \tau_{K,\sigma} + d_{K,\sigma} \tau_{L,\sigma}}, \quad
\dsp \omega_{L,\revOne{\sigma}} = \frac{d_{K,\sigma} \tau_{L,\sigma}}{d_{L,\sigma} \tau_{K,\sigma} + d_{K,\sigma} \tau_{L,\sigma}}, \label{omega}   
\end{equation}
\begin{align}
&\tau_{K,\sigma} = \n_{K, \sigma} \mean{\bLambda}{K}\n_{K, \sigma}, \quad
\tau_{L,\sigma} = \n_{L, \sigma} \mean{\bLambda}{L}\n_{L, \sigma}, \\
&\dsp \boldsymbol{\tau}_K^{\sigma}=(\bLambda_K-\tau_{K,\sigma}\rm{Id})\n_{K,\sigma}, \quad \boldsymbol{\tau}_L^{\sigma}=(\bLambda_L-\tau_{L,\sigma}\rm{Id})\n_{L,\sigma},
\end{align}
$d_{K,\sigma}$, $d_{L,\sigma}$ are the distances of the cell centers to $\sigma$, $\vectmat{y}_K$, $\vectmat{y}_L$ their projection on $\sigma$ defined by (see Figure \ref{fig:averagingPoint})
$$\vectmat{y}_K=\vectmat{x}_K +d_{K,\sigma} \n_{K,\sigma}, \quad \vectmat{y}_L=\vectmat{x}_L+d_{L,\sigma} \n_{L,\sigma}.$$
\begin{figure}[H]
    \centering
    \includegraphics[scale=0.7]{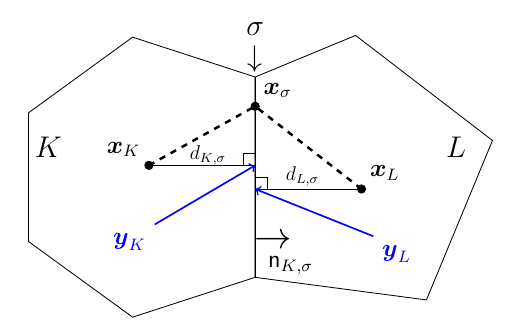}
    \caption{Harmonic averaging point.}
    \label{fig:averagingPoint}
\end{figure}
The pressure trace at $\sigma\in\calE_K$ is consistently reconstructed as % $I_{\sigma}p$
\begin{equation}
I_{\sigma}p = \sum_{M \in \{K,L\}} \!\omega_{M,\sigma} p_M, 
\text{ where }\sum_{M \in \{K,L\}} \!\omega_{M,\sigma}  = 1,\qquad \omega_{M,\sigma} \geq 0,
\label{eq:InterpolatedSol}
\end{equation}
using the same weights $\omega_{M,\sigma}$ as in \eqref{omega} (for more details see \cite{agelas2009nine}). With the previous approximations, the linear fluxes read
\begin{equation}\label{eq:linearFlux}
\tilde{F}_{K,\sigma}^{n+1} = \meas{\sigma} \sum_{\sigma^\prime \in \calS_{K, \sigma}} \alpha_{K,\sigma\sigma^\prime} [ (p^{n+1}+\rho g z)_{K,\sigma^\prime}-(p_K^{n+1}+\rho g z_K) ]   ,
\end{equation}
where we used the notation
\begin{equation}\label{U_ksigma}
    (u)_{K, \sigma} = 
%\left\{
%\begin{array}{l}
\begin{cases}
    I_{\sigma}u & \mbox{ if } \sigma=K|L, \\
    u_\sigma & \mbox{ otherwise.}
\end{cases}
%\end{array}
%\right.
\end{equation}

Finally, to define the numerical flux $F_{K,\sigma}^{n+1}$, the weights $\mu_{K,\sigma}$ and $\mu_{L,\sigma}$ can be chosen in different ways. In this work, we set $\mu_{K,\sigma}=\mu_{L,\sigma}=\frac{1}{2}$ if $\sigma=K|L$ and $\mu_{K,\sigma}=1$ if $\sigma \subset \partial \Omega$.
Let us remark that in \eqref{eq:linearFlux}, the average value of the boundary condition \eqref{eq:dirichlet}
is used on the face $\sigma^\prime$ if $\sigma^\prime \subset \Gamma_{\rm D}$. An additional unknown $p_{\sigma^\prime}$ is added if $\sigma^\prime \subset \Gamma_{\rm N}$ which can be computed by adding the homogeneous Neumann condition to the discrete system, namely:
$$
F_{K,\sigma}^{n+1}= 0.
$$
Our numerical output quantity of interest (QOI) $\scas$ is then defined as
 \begin{equation}
 \label{eq:qoi}
 \scas^{n+1}=\sum_{\substack{\sigma =K|L \\ \sigma \subset \Gamma_{\rm int}}} F_{K,\sigma}^{n+1}.
\end{equation}

Our HF model consists in the solutions of the $N$ discrete linear systems obtained after assembling equations \eqref{AssembeledEqu} for all $K \in \calT$, $n \in \{0,\ldots,N-1\}$
\begin{equation}\label{fullOrderSystem}
(\matM +\Delta t \matA ) {\vecp}_\calM^{n+1}= \matM {\vecp}_\calM^{n}+ \Delta t \, {\vecb},
\end{equation}
see also \eqref{discreteFormParab} below, where
\begin{itemize}
\item $\matA \in \mathbb{R}^{\mathcal{N}\times\mathcal{N}} $ is a matrix containing the terms $\alpha_{K,\sigma\sigma^\prime}$, with $\mathcal{N}=\rm{N_c}+{\rm N_{b}}$ ($\rm{N_c}$ is the total number of cells and ${\rm N_{b}}$ the number of boundaries where we have imposed Neumann boundary condition),
\item $\matM \in \mathbb{R}^{\mathcal{N}\times\mathcal{N}} $ is such that 
$$
\matM= 
\begin{bmatrix}
\matM_{\rm c} & \Large{0} \\
\Large{0} & \Large{0}
\end{bmatrix},
$$
where $\matM_{\rm c} \in \mathbb{R}^{\rm{N_c}\times \rm{N_c}} $ is a diagonal matrix made of the quantities $\meas{K}\phi_K c_t$,
\item ${\vecp}_\calM^{ n+1}\in \mathbb{R}^{\mathcal{N}}$ is a vector composed of the values of the pressure $p^{n+1}$ in each element $K\in \calT$ and on the edges $\sigma \subset\Gamma_{\rm N} \cap \partial K $,
\item ${\vecb}\in \mathbb{R}^{\mathcal{N}}$ contains the Dirichlet condition values as well as the source terms $\mathcal{Q}_{K}^{n+1}$.
\end{itemize}
The QOI \eqref{eq:qoi} can be rewritten in
 \begin{equation}\label{DiscreteOutput}
 \scas^{n+1}=\boldsymbol{l}^T {\vecp}_\calM^{n+1}+ c,
\end{equation} 
with $\boldsymbol{l} \in \R^{\calN}$. We want to reduce the computational cost associated with the multiple resolutions of equations \eqref{AssembeledEqu} and \eqref{DiscreteOutput} that occur when 
changing the permeability values $\kappa_1$ and $\kappa_2$ and assembling the corresponding  values of $\matA$, $\vecb$ and $\boldsymbol{l}$.
We equip $\R^{\calN}$ with the inner product $\left\langle \cdot,\cdot \right\rangle_{\boldsymbol{G}^*}$ and the corresponding norm $\|\cdot\|_{\bG^{*}}$, where $\bG^{*}$ is a symmetric positive definite matrix in $\R^{\calN\times\calN}$ that will be defined in the sequel. The Euclidean norm on $\R^{\calN}$ is denoted by $\|\cdot\|$.

\section{Reduced Basis technique in % multi-query
time-dependent setting}\label{RB}

We now develop a goal-oriented RB procedure for
HF model \eqref{fullOrderSystem} and the QOI \eqref{DiscreteOutput}. 

In Section \ref{RBGeneral}, we define a LF model based on Galerkin projection. Then, in Section \ref{RBOffline}, we explain how to compute a Galerkin projection subspace by a POD-Greedy approach. The crux of our POD-Greedy approach is the minimization of an a posteriori error estimator of the QOI % after Galerkin projection
that is described in Section \ref{EstimatorsQ}.

\subsection{Low-fidelity model}\label{RBGeneral}

We consider the discrete space-time problem 
%as a sparse matrix $\mathcal{A}$ such that
\begin{equation}\label{discreteFormParab}
    %\mathcal{A}:= Ce symbole n'est pas utilisé dans la suite
    \begin{bmatrix}
\matM+ \Delta t \matA & 0    & 0     &  \ldots      & \ldots    & 0  \\
 - \matM & \matM + \Delta t \matA & \ddots & \ddots &        & \vdots\\
0 & - \matM & \ddots & \ddots & \ddots &  \vdots  \\
 \vdots  & \ddots & \ddots & \ddots & \ddots & 0  \\
\vdots &        & \ddots & \ddots & \ddots &  0 \\
0 & \ldots      &     \ldots    & 0    & - \matM       & \matM + \Delta t \matA
\end{bmatrix}
\begin{bmatrix}
\vecp_\calM^{1} \\
\vecp_\calM^{2} \\
\vdots\\
\vdots \\
\vecp_\calM^{N-1}\\
\vecp_\calM^{N}
\end{bmatrix}= \Delta t
\begin{bmatrix}
\, \vecb + \matM\vecp_\calM^{0} \\
\;\, \\
\, \vecb \;\, \\
\, \vdots \;\,\\
\, \vdots \;\, \\
\, \vecb \;\,\\
\, \vecb \;\,
\end{bmatrix},
\end{equation}
in a multi-query setting where \eqref{discreteFormParab} has to be solved for many values of a parameter contained in $\matA$ and $\vecb$. %, such as the permeability field $\kappa$. 
In the sequel, we generically denote $\xi$ that parameter -- which is simply $(\kappa_1,\kappa_2)\in\mathbb R^2$ in our application.

Numerical reduction (i.e. reduction of the computational cost without % significant
loss of accuracy) can be achieved by replacing \eqref{discreteFormParab}, for many parameter values, by its % accurate
Galerkin projection onto a % sufficiently rich 
linear subspace spanned by a reduced basis $\boldsymbol{Z}_{\rm pr}\in \R^{\calN\times\sfN_{\rm pr}}$, $\sfN_{\rm pr} \ll \calN$.
The reduced solution $\vecp^{\mathsf{N}_{\rm pr},n+1}$ at time $n+1$ is defined as 
\begin{equation}
\label{eq:reducedsolution}
 \vecp^{\mathsf{N}_{\rm pr},n+1}= \boldsymbol{Z}_{\rm pr}\; {\widetilde{\vecp}}^{n+1},
\end{equation}
where $ \widetilde{\vecp}^{n+1}$ is the solution to
\begin{equation}\label{primalReducedEq}
\big(\boldsymbol{Z}_{\rm pr}^T\;\matM \boldsymbol{Z}_{\rm pr} +\Delta t  \;\boldsymbol{Z}_{\rm pr}^T\;\matA \boldsymbol{Z}_{\rm pr} \big) {\widetilde{\vecp}}^{ n+1}=\boldsymbol{Z}_{\rm pr}^T\;\matM \boldsymbol{Z}_{\rm pr}\; {\widetilde{\vecp}}^{n}+ \Delta t\; \boldsymbol{Z}_{\rm pr}^T \; \vecb
\end{equation}
in the so-called \textbf{online phase}, once $\boldsymbol{Z}_{\rm pr}$ has been computed. To that aim, a reduced basis $\boldsymbol{Z}_{\rm pr}$ can first be computed in a so-called \textbf{offline phase}, e.g. using a POD-Greedy method.

\subsection{POD-Greedy method}\label{RBOffline}

The aim of a POD-Greedy algorithm is to iteratively construct matrices $\boldsymbol{Z}^{N_1}_{\rm pr}\in\mathbb R^{\calN\times N_1}$, $\boldsymbol{Z}^{N_2}_{\rm pr}\in\mathbb R^{\calN\times N_2}$, $\ldots$,
of rank $N_1<N_2<\ldots$ such that $\mathop{\rm Span}\boldsymbol{Z}^{N_i}_{\rm pr}$ is a % linear
subspace of the vector space $\mathop{\rm Span}\boldsymbol{Z}^{N_j}_{\rm pr}$ spanned by the column vectors of $\boldsymbol{Z}^{N_j}_{\rm pr}$ as soon as $N_i\le N_j$. At the end of the algorithm, $\boldsymbol{Z}^{\sfN_{\rm pr}}_{\rm pr}\equiv \boldsymbol{Z}_{\rm pr}$ is the so-called reduced basis useful for Galerkin projection in \eqref{primalReducedEq}, of dimension $\sfN_{\rm pr} \ll \calN$. % hopefully

The final iteration is reached when some projection error is under a fixed tolerance $\epsilon_{\rm tol} >0$, for instance 
$$ \vertiii{ \vece^n}_{\rm pr}(\xi) \leq \epsilon_{\rm tol} \quad \forall\xi \in \Xi, $$
where $\vertiii{\cdot}_{\rm pr}$ is a norm on the HF space $\R^{\calN}$ (see proposition \ref{Prop1}), 
\begin{equation}\label{primalRedError}
   \vece^n(\xi)=\vecp_\calM^{n}(\xi)-\vecp^{\mathsf{N}_{\rm pr},n}(\xi)
\end{equation}
is the "primal" reduction error at parameter value $\xi$, and $\Xi:=\{\xi_1,\ldots,\xi_{\calL}\}$ is a training set of parameter values. % with finite cardinality
Between two iterations, a scalar $ric \in [0,1]$ controls the increase $N_{i+1}-N_i$: % in dimension / rank
one only adds to $\boldsymbol{Z}^{N_i}_{\rm pr}\in\mathbb R^{\calN\times N_i}$ the largest % first 
$N_{i+1}-N_i$ POD modes of a "snapshot" matrix (collecting the time evolution of the projection error at a new selected parameter $\xi_{\ell}$) (see Algorithm \ref{alg:PrimalAlgoP}). 
This type of basis-increase between two iterations % of a greedy-type algorithm 
has been introduced % empirically
in \cite{Haasdonk2008} and has remained a standard since, when one iteratively constructs a reduced basis with a greedy-type algorithm that iteratively selects parameter values $\xi_{\ell_1},\xi_{\ell_2},\ldots\in \Xi$ so as to increase $\boldsymbol{Z}^{N_i}_{\rm pr}\in\mathbb R^{\calN\times N_i}$ using the time trajectory $\{\vecp_\calM^{n+1}(\xi_{\ell_i})\}_{n=0}^{N-1}$ at iteration $i$.

The quality of the POD-Greedy selection (i.e. the accuracy reached by the approximation $\vecp^{\mathsf{N}_{\rm pr},n}(\xi)\approx\vecp_\calM^{n}(\xi)$ % in \eqref{eq:reducedsolution}
for all $n=1,\ldots,N$ and $\xi\in\Xi$) strongly depends on the parameter $\xi_{\ell_i}$ selected at iteration $i$. % but is otherwise good -- cite latest paper 10.1137/22M1493318 by haasdonk and ohlberger ??
To that aim, we propose % as selection criterion
to choose $\xi_{\ell_i}$ as a maximizer of a new % computable
a posteriori estimator $\Delta_{\rm{pr}}^N(\xi)$ of the reduction error 
$$\vertiii{\vecp_\calM^{N}(\xi)-\vecp^{\sfN_{\rm pr},N}(\xi)}_{\rm pr} \leq \Delta_{\rm{pr}}^N(\xi) $$
among all training parameter values $\xi\in\Xi$.

\begin{algorithm}[H]
 \begin{algorithmic}[1]
  \caption{POD-greedy algorithm using $\Delta_{\rm{pr}}^N$}
 \State \textbf{Procedure} $\boldsymbol{Z}_{\rm{pr}}=\textbf{POD-Greedy}(\mathsf{N}_{ \max},\epsilon_{\rm tol},\Xi, ric).$
 \State$ \mathsf{N}_{\rm pr}=1$, $\delta^{ \mathsf{N}_{\rm pr}}=\epsilon_{\rm tol}+1.$
 \State  Take $\xi_1 \in \Xi$, $\ell=1$ and set $\Xi^{\ell}=\{\xi_1\}.$
 \State Define $\boldsymbol{Z}_{\rm pr}=\emptyset$.
 \While{$\delta^{ \mathsf{N}_{\rm pr}} > \epsilon_{\rm tol}$ and $\rm \mathsf{N}_{\rm pr} <  \mathsf{N}_{\max}$}.
 \State Compute $\vecp_\calM^{n}(\xi_{\ell})$ for $1\leq n \leq N.$
 \State Set $\mathbf{S}_{\rm{pr}} := \big[\vecp_\calM^{1}(\xi_{\ell})- {\rm{Proj}}_{\boldsymbol{Z}_{\rm {pr}}}(\vecp_\calM^{1}(\xi_{\ell})) \big| \ldots \big|\vecp_\calM^{N}(\xi_{\ell})- {\rm{Proj}}_{\boldsymbol{Z}_{\rm{ pr}}} (\vecp_\calM^{N}(\xi_{\ell}) )\big]$.
 \State  Compute $\big [{\boldsymbol{z}}_1|\ldots|\boldsymbol{z}_{\delta \sfN_{\rm pr}} \big ] =\textbf{POD}(\mathbf{S}_{\rm{pr}}, ric) $ using Algorithm \ref{algo:podric}.
  \State Define $\boldsymbol{Z}^{\mathsf{N}_{\rm pr} +\delta \mathsf{N}_{\rm pr} }_{\rm{pr}}:=orthonormalize (\boldsymbol{Z}^{\mathsf{N}_{\rm pr}}_{\rm{pr}}\cup%\{\widetilde{\boldsymbol{Z}}{}^{\delta \mathsf{N}}_{\rm{pr}}\})
  \big [{\boldsymbol{z}}_1|\ldots|\boldsymbol{z}_{\delta \sfN_{\rm pr}} \big ])$ using Algorithm \ref{algo:GramSchmidt}.
 \State Compute $\delta^{\mathsf{N}_{\rm pr}}= \underset{\xi \in \Xi}{\max} \; \Delta_{\rm{pr}}^N.$
 \State Set $\xi_{\ell+1}=\arg\underset{\xi \in \Xi}{\max}\;\Delta_{\rm{pr}}^N.$
 \State  $\Xi^{ \ell+1} \gets \Xi^{\ell}\cup \{\xi_{\ell+1}\}.$
 \State  ${ \mathsf{N}_{\rm pr}}\gets {\mathsf{N}_{\rm pr}+\delta\mathsf{N}_{\rm pr}}.$
 \State $\ell \gets \ell+1.$
 \EndWhile
 \label{alg:PrimalAlgoP}
 \end{algorithmic}
\end{algorithm}
 
\begin{algorithm}[H]
\hspace*{\algorithmicindent} \textbf{Input:} $\mathbf{S}_{\rm{pr}}\in\mathbb R^{\calN\times N}$, $ric\in(0,1)$ \\
\hspace*{\algorithmicindent} \textbf{Output:} $\boldsymbol{G}^*$-orthonormal $\big [{\boldsymbol{z}}_1|\ldots|\boldsymbol{z}_{\delta \sfN_{\rm pr}} \big ]$.  
\begin{algorithmic}[1]
\caption{POD with ric}
\For{$i \leftarrow 1,\ldots, N$}
\State $\boldsymbol{V}_i,\sigma_i$ $i$ largest vector and eigenvalue pair of 
$\boldsymbol{C}=\boldsymbol{S}_{\rm pr}^T \boldsymbol{G}^* \boldsymbol{S}_{\rm pr}$
\EndFor
\State $\delta \sfN_{\rm pr} \leftarrow 1$
\While{$\EnergyPercent_{\delta \sfN_{\rm pr}}=\frac{\sum\limits_{n=1}^{\delta \sfN_{\rm pr}} \lambda_n}{\sum\limits_{n=1}^{N}\lambda_n} < ric$}
\State $\delta \sfN_{\rm pr} \leftarrow 1+\delta \sfN_{\rm pr}$
\State $\boldsymbol{z}_{\delta \sfN_{\rm pr}}=\frac{1}{\sqrt{\sigma_{\delta \sfN_{\rm pr}}}}\boldsymbol{S}_{\rm pr}\widetilde{\boldsymbol{V}}_{\delta \sfN_{\rm pr}}$ 
\EndWhile
\label{algo:podric}
\end{algorithmic}
\end{algorithm}

\begin{algorithm}[H]
\hspace*{\algorithmicindent} \textbf{Input:} vectors $\boldsymbol{v}_i$, $i\in 1,\ldots, \mathsf{N}_{\rm pr}$.\\
 \hspace*{\algorithmicindent} \textbf{Output:} orthonormal vectors $\boldsymbol{v}_i$.  
\begin{algorithmic}[1]
\caption{Gram-Schmidt with re-iteration}
\For{$\ell=1,2$}
\For{$i \leftarrow 1,\ldots, \mathsf{N}_{\rm pr}$}
\For{$j \leftarrow 1,\ldots,(i-1)$}
\State $\boldsymbol{v}_i \leftarrow \boldsymbol{v}_i - \langle \boldsymbol{v}_i,\boldsymbol{v}_j \rangle_{\boldsymbol{G}^*}\;\boldsymbol{v}_j.$
\EndFor
\EndFor
\State $\boldsymbol{v}_i=\boldsymbol{v}_i/\|\boldsymbol{v}_i\|_{\boldsymbol{G}^*}.$
\EndFor
\label{algo:GramSchmidt}
\end{algorithmic}
\end{algorithm}

\subsection{A posteriori estimation of the primal error}\label{EstimatorsP}

%\begin{definition}
We define the residue of $\vecp^{\mathsf{N}_{\rm pr},n+1}$ as the following linear form
\begin{equation}\label{primalResidual}
\left\langle{\vecr}(\vecp^{\mathsf{N}_{\rm pr},n+1}),\vecv \right\rangle = \frac{1}{\Delta t} \left\langle (\matM +\Delta t \matA ) {\vecp}^{\mathsf{N}_{\rm pr}, n+1} - \matM {\vecp}^{\mathsf{N}_{\rm pr},n} - \Delta t {\vecb} , \, \vecv \right\rangle\,,
\forall \vecv\in \R^{\calN}
\end{equation}
which we also denote % as its representant 
$\vecr(\vecp^{\mathsf{N}_{\rm pr},n+1})={\vecr}^{n+1}$ for the sake of simplicity. The residual dual norm is next defined as
\begin{equation}\label{ResidualNormPrimal}
\|\vecr ^ {n+1}\|_{-1}= \underset{\boldsymbol{v}\in \mathbb{R}^{\mathcal{N}} }{\sup} \frac{\langle \vecr^{n+1},\boldsymbol{v}\rangle}{\|\boldsymbol{v}\|_{\bG^{*}}}
\end{equation}
%\end{definition}
and we now propose to evaluate the primal reduction error $\vece^N$ \emph{a posteriori} (with a computable estimator $\Delta_{\rm pr}^{N}$) using a new space-time energy norm $\vertiii{\cdot}_{\rm pr}$ independent from the parameter $\xi$.

\begin{prop}[Energy a posteriori error estimate for the primal problem]\label{Prop1}
Denote $\matA= \frac{1}{2} (\matA+\matA^T )+\frac{1}{2} (\matA-\matA^T) := \matA_{\rm sym}+\matA_{\rm skew}$ the symmetric and skew-symmetric of matrices $\matA$. For any $\xi$, given lower bounds % $\alpha_{\matA_{\rm sym}}(\xi)$ 
\begin{equation}\label{alphaE}
    \alpha_{\matA_{\rm sym},\rm LB}(\xi)\le  \inf_{\vecv \in \mathbb{R}^{\mathcal{N}}  }\frac{\vecv^T\matA_{\rm sym} \vecv}{\|\vecv\|_{\boldsymbol{G}^*}^2 }:=\alpha_{\matA_{\rm sym}}(\xi)
\end{equation}
% the smallest generalized eigenvalue of the problem \begin{equation}\label{alphak} \matA_{\rm sym}  \boldsymbol{v} = \lambda \, (\matM +\Delta t \matA_{\rm sym}^*) \boldsymbol{v}\huy{;} \end{equation}
% and $\alpha_{\bG}(\xi)$ 
\begin{equation}\label{beta}
    \alpha_{\bG,\rm LB}(\xi) \le \inf_{\vecv \in \mathbb{R}^{\mathcal{N}}  }\frac{\vecv^T(\matM +\Delta t\matA_{\rm sym})\vecv}{\|\vecv\|_{\boldsymbol{G}^*}^2 }:=\alpha_{\bG}(\xi)
\end{equation}
% the smallest generalized eigenvalue of the problem\begin{equation}\label{beta}\bG \boldsymbol{v} = \lambda \, (\matM +\Delta t\matA_{\rm sym}^*) \boldsymbol{v}. \end{equation}
there holds
\begin{equation}
\vertiii{\vece^N}_{\rm pr}:= \bigg(\sum_{m=1}^{N} \langle \vece^m,\matM \vece^m\rangle+ \Delta t  \sum_{m=1}^{N} \langle \vece^{m},\matA_{\rm sym}^*\vece^{m} \rangle\bigg)^{\!1/2} \leq \Delta_{\rm pr}^{N},
\end{equation}
where the upper bound is taken as
\begin{equation}\label{PrimalEstimator}
\Delta_{\rm pr}^{ N}:= \bigg(\frac{T+\Delta t}{\alpha_{\bG,\rm LB} \;\alpha_{\matA_{\rm sym},\rm LB} }\sum_{m=1}^{N}\|{\vecr}^{m}\|_{-1}^2 \bigg)^{\!1/2},
\end{equation}
and where $\matA_{\rm sym}^*$ is the symmetric part of $\matA^*$ for a specific parameter $\xi^*$. % fixed
\end{prop}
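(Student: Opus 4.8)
The plan is to run a discrete space--time energy argument on the error equation and to close it with the two coercivity lower bounds \eqref{alphaE} and \eqref{beta}. First I would derive the evolution equation satisfied by the reduction error $\vece^{n}$ from \eqref{primalRedError}: subtracting the residual identity \eqref{primalResidual} from the high-fidelity relation \eqref{fullOrderSystem} gives, for every $\vecv\in\R^{\calN}$,
\begin{equation*}
\langle(\matM+\Delta t\matA)\vece^{n+1},\vecv\rangle=\langle\matM\vece^{n},\vecv\rangle-\Delta t\,\langle\vecr^{n+1},\vecv\rangle,
\end{equation*}
with $\vece^{0}=0$ when the initial datum is common to both models. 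Testing with $\vecv=\vece^{n+1}$ and using $\langle\matA\vece^{n+1},\vece^{n+1}\rangle=\langle\matA_{\rm sym}\vece^{n+1},\vece^{n+1}\rangle$ (the skew part drops) yields the one-step energy identity $\langle\matM\vece^{n+1},\vece^{n+1}\rangle+\Delta t\langle\matA_{\rm sym}\vece^{n+1},\vece^{n+1}\rangle=\langle\matM\vece^{n},\vece^{n+1}\rangle-\Delta t\langle\vecr^{n+1},\vece^{n+1}\rangle$.

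Next I would bound the two right-hand terms. The cross term is treated by Cauchy--Schwarz and Young in the $\matM$-seminorm, $\langle\matM\vece^{n},\vece^{n+1}\rangle\le\frac12\langle\matM\vece^{n},\vece^{n}\rangle+\frac12\langle\matM\vece^{n+1},\vece^{n+1}\rangle$, which after cancellation exhibits the discrete-derivative structure; the residual term is controlled through the dual norm \eqref{ResidualNormPrimal}, $-\Delta t\langle\vecr^{n+1},\vece^{n+1}\rangle\le\Delta t\,\|\vecr^{n+1}\|_{-1}\|\vece^{n+1}\|_{\bG^{*}}$. Summing over $n=0,\dots,N-1$ telescopes the $\matM$-energy and accumulates the $\matA_{\rm sym}$-energy; reading the partial sums at every index $k\le N$ produces a uniform-in-time bound on $\langle\matM\vece^{k},\vece^{k}\rangle$ and on $\Delta t\sum_{m}\langle\matA_{\rm sym}\vece^{m},\vece^{m}\rangle$ by $\sum_{m}\|\vecr^{m}\|_{-1}\|\vece^{m}\|_{\bG^{*}}$. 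Summing that uniform bound over the $N=T/\Delta t$ indices is precisely what upgrades the $L^{\infty}$-in-time control of $\langle\matM\vece^{m},\vece^{m}\rangle$ into the $L^{2}$-in-time sum $\sum_{m}\langle\matM\vece^{m},\vece^{m}\rangle$ appearing in $\vertiii{\cdot}_{\rm pr}$, and is the origin of the time factor $T+\Delta t$.

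It then remains to close the estimate, and here the two lower bounds enter with distinct roles. I would use \eqref{beta} to convert the $\bG^{*}$-norm of the error into the full one-step energy $\langle(\matM+\Delta t\matA_{\rm sym})\vece^{m},\vece^{m}\rangle$, so that a Cauchy--Schwarz over the time index turns $\sum_{m}\|\vecr^{m}\|_{-1}\|\vece^{m}\|_{\bG^{*}}$ into $\big(\sum_{m}\|\vecr^{m}\|_{-1}^{2}\big)^{1/2}$ times $\alpha_{\bG,\rm LB}^{-1/2}$ times the energy, while \eqref{alphaE} reconciles the parameter-independent reference matrix $\matA_{\rm sym}^{*}$ used in $\vertiii{\cdot}_{\rm pr}$ with the current $\matA_{\rm sym}$, supplying the second reciprocal $\alpha_{\matA_{\rm sym},\rm LB}^{-1}$. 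Since $\vertiii{\vece^{N}}_{\rm pr}$ reappears on the right, I would finish by the classical self-referential step: bound $\vertiii{\vece^{N}}_{\rm pr}^{2}\le(\text{const})\,\vertiii{\vece^{N}}_{\rm pr}\big(\sum_{m}\|\vecr^{m}\|_{-1}^{2}\big)^{1/2}$, divide by $\vertiii{\vece^{N}}_{\rm pr}$ and square, reaching \eqref{PrimalEstimator}. An equivalent route splits $\|\vecr^{m}\|_{-1}\|\vece^{m}\|_{\bG^{*}}$ by Young and absorbs the $\|\vece^{m}\|_{\bG^{*}}^{2}$ term into the energy up front, delivering the squared residual directly without the division.

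The main obstacle is the constant bookkeeping rather than any single inequality: keeping the product $\alpha_{\bG,\rm LB}\,\alpha_{\matA_{\rm sym},\rm LB}$ and the factor $T+\Delta t$ sharp requires carefully coordinating which coercivity bound is spent on the residual/Young step and which on the comparison between $\matA_{\rm sym}^{*}$ and $\matA_{\rm sym}$, and verifying that summing the uniform-in-time bound does not silently discard the $\matM$-energy term under telescoping. The assumption $\vece^{0}=0$ (or, failing it, the treatment of a nonzero initial projection error) must also be tracked, since it fixes the lower endpoint of every sum.
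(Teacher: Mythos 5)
Your plan is essentially the paper's own proof (Appendix~\ref{Aproof1}): form the error recursion, test with $\vece^n$, apply Cauchy--Schwarz and Young, telescope with $\vece^0=0$, read the partial sums at every index to get uniform-in-time control of $\langle\matM\vece^n,\vece^n\rangle$, and sum that over the $N$ indices to produce the $L^2$-in-time bound and the factor $T+\Delta t$; your ``Young up front'' variant is exactly the route the paper takes. The one point where your bookkeeping as described would not land on the stated constant is the assignment of the two coercivity bounds: the paper spends \eqref{alphaE} in the Young step (choosing $\rho=\sqrt{\alpha_{\matA_{\rm sym}}}$ so that $\alpha_{\matA_{\rm sym}}\|\vece^n\|_{\bG^*}^2\le\langle\matA_{\rm sym}\vece^n,\vece^n\rangle$ is absorbed into the left-hand energy, yielding $\alpha_{\matA_{\rm sym},\rm LB}^{-1}$), and uses \eqref{beta} only at the very end to pass from the $\xi$-dependent energy $\langle(\matM+\Delta t\,\matA_{\rm sym})\vece^m,\vece^m\rangle$ to the $\xi$-independent one built on $\matA_{\rm sym}^{*}$ (yielding $\alpha_{\bG,\rm LB}^{-1}$) --- the reverse of the roles you assign, and following your assignment literally would give a different (roughly $\alpha_{\bG,\rm LB}^{-2}$-type) constant rather than \eqref{PrimalEstimator}.
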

\begin{proof}
    See Appendix \ref{Aproof1}.
\end{proof}

Note that with Prop.\ref{Prop1}, the primal reduction error can be evaluated numerically using \emph{the same norm} of $L^2([0,T];H^1(\Omega))$-type for all values of the parameter $\xi$, as opposed to \cite[Prop.~4.3]{Haasdonk2008} e.g. \\
%\begin{remark} %% au debut ??
A typical choice for the symmetric and positive definite matrix $\boldsymbol{G}^*$ (in our numerical results of Section~\ref{NumRes} e.g.) is 
$$ \boldsymbol{G}^*=\matM+ \Delta t \; \matA_{\rm sym}^* $$
which allows for the simplifications
$\vertiii{\vece^N}_{\rm pr}^2= \sum_{m=1}^{N} \|\vece^m\|_{\bG^*}^2$ and
\begin{equation}\label{CHECK}
    \alpha_{\bG}(\xi)= \Delta t \alpha_{\matA_{\rm sym}}(\xi) + \alpha_{\matM} 
\end{equation}
where $\alpha_{\matM}:= \inf_{\vecv \in \mathbb{R}^{\mathcal{N}}  }\frac{\vecv^T\matM\vecv}{\|\vecv\|_{\boldsymbol{G}^*}^2 }$ can be computed once for all, independently of the parameter $\xi$. 

%\end{remark}

\subsection{A posteriori estimation of the % linear
QOI error}\label{EstimatorsQ}

In our goal-oriented setting, one is mostly interested by the % evaluation of the quantity of interest 
QOI \eqref{DiscreteOutput} for many values of $\xi$
\begin{equation}\label{output}
\langle \boldsymbol{l},\vecp_\calM^{n} \rangle = \scas^n.
\end{equation}

Then, a reduced basis can in fact be constructed after modifying the POD-Greedy Algorithm \ref{alg:PrimalAlgoP} based on $\Delta_{\rm pr}^N$ into a POD-Greedy based on an a posteriori estimator $\Delta_s^N$ of the QOI reduction error, see Algorithm \ref{alg:OutputPAlgo}. 

To define an a posteriori estimator $\Delta_s^N$ for a QOI linear in the primal problem, let us now introduce for all $n=1, \ldots, N$ a dual problem which evolves backward in time
\begin{subequations}\label{dualPb}
\begin{align}
\matM \boldsymbol{\psi}_{\calM,n}^{n} & = -\boldsymbol{l},
\\
(\matM +\Delta t \matA^T)\boldsymbol{\psi}_{\calM,n}^m & = \matM \boldsymbol{\psi}_{\calM,n}^{m+1}\qquad m= 0,\ldots,n-1.
\end{align}
\end{subequations}
Since $\matM$, $\matA^T$ and $\boldsymbol{l}$ do not depend on time, we only solve once the following problem:
 \begin{subequations}\label{NewDualPb}
\begin{align}
\matM \boldsymbol{\Psi}_{\calM}^{N} & = -\boldsymbol{l},
\\
(\matM +\Delta t \matA^T)\boldsymbol{\Psi}_\calM^n & = \matM \boldsymbol{\Psi}_\calM^{n+1}\qquad n= 0,\ldots,N-1.
\end{align}
\end{subequations}
Then we appropriately shift the results by defining
\begin{equation}
\boldsymbol{\boldsymbol{\psi}}_{\calM,n}^m=\boldsymbol{\Psi}_\calM^{N-n+m} \qquad  m= 0,\ldots,n.
\end{equation}

Let us also introduce a reduced basis $\boldsymbol{Z}_{\rm{du}}$ for the dual problem, such that at time $t=n$ one computes an approximation of the dual solution in \eqref{NewDualPb} as 
\begin{equation}\label{dualReducedSol}
    \boldsymbol{\Psi}^{\mathsf{N}_{\rm du},n}=\boldsymbol{Z}_{\rm{du}}\; \widetilde{\boldsymbol{\Psi}}{}^{n},
\end{equation}
by a Galerkin projection, with $\widetilde{\boldsymbol{\Psi}}{}^{n}$ solution to 
\begin{equation}\label{dualReducedEq}
\big(\boldsymbol{Z}_{\rm{du}}^T\;\matM \boldsymbol{Z}_{\rm{du}} +\Delta t \;\boldsymbol{Z}_{\rm{du}}^T \;\matA^T \boldsymbol{Z}_{\rm{du}} \big) \widetilde{\boldsymbol{\Psi}}{}^n =\boldsymbol{Z}_{\rm{du}}^T\;\matM\boldsymbol{Z}_{\rm{du}} \;\widetilde{\boldsymbol{\Psi}}{}^{n+1}.
\end{equation}

We denote by $\vecepsi^{n}=\boldsymbol{\Psi}_\calM^n-\boldsymbol{\Psi}^{\mathsf{N}_{\rm du},n}$ the dual reduction error at $t=n$ and $\vecrho^{n}$
%\begin{definition}
the residue associated with % the reduced basis solution for 
the dual problem % is defined as 
\begin{equation}
\left \langle{\vecrho}^{n} ,\vecv \right\rangle= \frac{1}{\Delta t}  \big\langle (\matM +\Delta t\; \matA^T)\boldsymbol{\Psi}^{\mathsf{N}_{\rm du},n} - \matM \boldsymbol{\Psi}^{\mathsf{N}_{\rm du},n+1} , \, \vecv \big\rangle,
\end{equation}
with a residual dual norm taken as 
\begin{equation}\label{ResidualNormDual}
\|\vecrho ^ {n}\|_{-1}= \underset{\boldsymbol{v}\in \mathbb{R}^{\mathcal{N}}}{\sup} \frac{\langle \vecrho^{n},\boldsymbol{v}\rangle}{\|\boldsymbol{v}\|_{\bG^{*}}}.
\end{equation}
%\end{definition}

\begin{prop}[Energy a posteriori error estimate for the dual problem]\label{prop2}
Given the same data as in Prop.~\ref{Prop1}, there holds for all $\xi$
% for the dual reduction error $\vecepsi^{m}=\boldsymbol{\Psi}^m-\boldsymbol{\Psi}^{\mathsf{N}_{\rm du},m}$ 
\begin{subequations}
\begin{equation}
\vertiii{\vecepsi^{N}}_{\rm{du}}:= \bigg (\sum_{m=0}^{N-1} \langle \vecepsi^{m},\matM \vecepsi^{m}\rangle+\Delta t\sum_{m=0}^{N-1} \langle \vecepsi^{m},\matA_{\rm sym}^*\vecepsi^{m} \rangle\bigg )^{\!1/2}  \leq \Delta_{\rm du}^N
\end{equation}
in the space-time energy norm $\vertiii{\cdot}_{\rm du}$ independent of the parameter $\xi$, with
\begin{equation}\label{DualEstimator}
\Delta_{\rm du}^N:= \bigg (\frac{T+\Delta t}{\alpha_{\bG,\rm LB} \;\alpha_{\matA_{\rm sym},\rm LB} }\sum_{m=0}^{N-1}\| {\vecrho}^{m}\|_{-1}^2 \bigg )^{\!1/2}.
\end{equation}
\end{subequations}
\end{prop}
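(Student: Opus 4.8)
The plan is to deduce Proposition~\ref{prop2} from the primal estimate of Proposition~\ref{Prop1} by exploiting the structural symmetry between the two problems, rather than repeating the energy computation from scratch. The first step is to write the dual error recursion. Comparing the exact dual scheme \eqref{NewDualPb} with the reduced one \eqref{dualReducedEq}, the discrepancy being measured by the residual $\vecrho^{n}$, yields
\[
(\matM + \Delta t\,\matA^T)\,\vecepsi^{n} - \matM\,\vecepsi^{n+1} = -\Delta t\,\vecrho^{n},\qquad n = 0,\ldots,N-1,
\]
with an end condition $\matM\vecepsi^{N}=0$ inherited from the shared terminal data $\matM\boldsymbol{\Psi}_\calM^{N}=-\boldsymbol{l}$. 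This is the exact counterpart of the primal error recursion, with $\matA$ replaced by $\matA^T$ and time reversed.

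The decisive observation is that transposition does not change the symmetric part: $(\matA^T)_{\rm sym}=\frac{1}{2}(\matA^T+\matA)=\matA_{\rm sym}$, and $\matA_{\rm skew}$ continues to vanish in every quadratic form, $\langle\vecv,\matA^T\vecv\rangle=\langle\vecv,\matA_{\rm sym}\vecv\rangle$. Hence the two coercivity lower bounds \eqref{alphaE}--\eqref{beta}, together with the parameter-independent energy norm built on $\matA_{\rm sym}^*$, carry over to the dual problem \emph{with the very same constants} $\alpha_{\matA_{\rm sym},\rm LB}$ and $\alpha_{\bG,\rm LB}$. This is what makes the dual estimate \eqref{DualEstimator} structurally identical to the primal one.

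Next I would reverse time. Setting $\tilde{\vecepsi}^{m}:=\vecepsi^{N-m}$ and $\tilde{\vecrho}^{m}:=\vecrho^{N-m}$ turns the backward recursion above into a forward recursion of exactly the primal form,
\[
(\matM + \Delta t\,\matA^T)\,\tilde{\vecepsi}^{m} - \matM\,\tilde{\vecepsi}^{m-1} = -\Delta t\,\tilde{\vecrho}^{m},\qquad m = 1,\ldots,N,
\]
with $\matM\tilde{\vecepsi}^{0}=0$ playing the role of the vanishing primal initial error. Under $k=N-m$ the dual energy sum $\sum_{m=0}^{N-1}$ becomes $\sum_{k=1}^{N}$, so $\vertiii{\vecepsi^{N}}_{\rm du}$ equals the primal norm $\vertiii{\cdot}_{\rm pr}$ evaluated on $(\tilde{\vecepsi}^{m})$, and $\sum_{m=0}^{N-1}\|\vecrho^{m}\|_{-1}^{2}=\sum_{k=1}^{N}\|\tilde{\vecrho}^{k}\|_{-1}^{2}$. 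At this point the dual system is an exact replica of the primal one, so I would invoke the computation of Proposition~\ref{Prop1} verbatim --- the telescoping of the $\matM$-terms, the Cauchy--Schwarz/Young bound $\Delta t\,|\langle\tilde{\vecrho}^{m},\tilde{\vecepsi}^{m}\rangle|\le\Delta t\,\|\tilde{\vecrho}^{m}\|_{-1}\|\tilde{\vecepsi}^{m}\|_{\bG^{*}}$, the two coercivity bounds, and the summation step producing the factor $T+\Delta t$ --- to obtain precisely \eqref{DualEstimator}.

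The main difficulty I anticipate is not analytical but a matter of careful index bookkeeping: one must check that, after time reversal, the dual range $m=0,\ldots,N-1$ aligns with the primal range $m=1,\ldots,N$ so that no spurious end-layer term survives. In particular, discarding the $m=N$ (i.e.\ $\tilde{m}=0$) contribution relies on $\langle\matM\vecepsi^{N},\vecepsi^{N}\rangle=0$, which holds because the reduced terminal solution is constructed to satisfy the same relation $\matM\boldsymbol{\Psi}^{\sfN_{\rm du},N}=-\boldsymbol{l}$ as the high-fidelity one; verifying that this $\matM$-semidefinite end condition is indeed enough to close the estimate is the one point requiring genuine care. Once it is settled, Proposition~\ref{prop2} follows from Proposition~\ref{Prop1} with no new inequality.
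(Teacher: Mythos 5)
Your reduction of the dual estimate to the primal one via time reversal and the identity $(\matA^T)_{\rm sym}=\matA_{\rm sym}$ is correct and is, in substance, exactly what the paper does: its proof simply reruns the primal energy argument (Cauchy--Schwarz, Young's inequality with $\rho=\sqrt{\alpha_{\matA_{\rm sym}}}$, the observation that $\langle \matA^T\vecepsi^m,\vecepsi^m\rangle=\langle \matA_{\rm sym}\vecepsi^m,\vecepsi^m\rangle$, telescoping of the $\matM$-terms backward from $N$, then the double summation producing the factor $T+\Delta t$) with $\matA$ replaced by $\matA^T$ and the terminal condition $\vecepsi^N=0$ playing the role of $\vece^0=0$. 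The terminal-layer subtlety you flag is dispatched in the paper by simply assuming $\vecepsi^N=0$, so your version is, if anything, slightly more explicit about the minimal condition actually needed, namely $\langle\matM\vecepsi^N,\vecepsi^N\rangle=0$.
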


\begin{proof}
    See Appendix \ref{Aproof2}.
\end{proof}

\begin{prop}[Output error evaluation]\label{prop3}
Given the same data as in Prop.~\ref{Prop1}, one can define two reduced outputs:
\begin{equation}\label{reducedOutput}
 {\scas}^{\mathsf{N}_{\rm s},n}= \left\langle{\boldsymbol{l}}, {\vecp}^{\mathsf{N}_{\rm pr},n}\right\rangle  + \Delta t\; \sum_{n'=0}^{n-1} \big\langle {\vecr}^{n'+1},\boldsymbol{\Psi}^{\mathsf{N}_{\rm du},N-n+n'} \big\rangle
\end{equation}
with approximation error bounded as
\begin{equation}\label{QOIEst}
 |{\scas}^{N}-  {\scas}^{\mathsf{N}_{\rm s},N}| \leq  \Delta t \;\Big(\sum_{n=1}^N \|\vecr^{n}\|_{-1}^2\Big)^{1/2} \Delta_{\rm du}^N=: \Delta_s^N,
\end{equation}
or
\begin{equation}\label{reducedOutput2} \widetilde{\scas}^{\mathsf{N}_{\rm s},n}= \big\langle{\boldsymbol{l}}, {\vecp}^{\mathsf{N}_{\rm pr},n}\big\rangle
\end{equation}
with approximation error bounded as
\begin{equation}\label{QOIEst2} % \label{SecondChoiceInq}
 |{\scas}^{N}-  \widetilde{\scas}^{\mathsf{N}_{\rm s},N}| \leq  
 \Delta t \;\Big(\sum_{n=1}^N \|\vecr^{n}\|_{-1}^2\Big)^{1/2} \Delta_{\rm du}^N+ \Delta t\; \sum_{n=0}^{N-1}\big|\big\langle {\vecr}^{n+1},\boldsymbol{\Psi}^{\mathsf{N}_{\rm du},n}\big\rangle\big|=:\widetilde{\Delta}_s^N.
\end{equation}
\end{prop}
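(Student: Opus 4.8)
The plan is to turn the output error into a single discrete space--time duality identity and then close it with Cauchy--Schwarz and Proposition~\ref{prop2}. First I would record the primal error equation: identifying the residual form \eqref{primalResidual} with its Euclidean Riesz representative $\Delta t\,\vecr^{m}=(\matM+\Delta t\matA)\vecp^{\mathsf{N}_{\rm pr},m}-\matM\vecp^{\mathsf{N}_{\rm pr},m-1}-\Delta t\,\vecb$ and subtracting it from the HF system \eqref{fullOrderSystem} gives, for $m=1,\dots,N$,
\[
(\matM+\Delta t\matA)\vece^{m}-\matM\vece^{m-1}=-\Delta t\,\vecr^{m}.
\]
Since $\scas^{N}-\widetilde{\scas}^{\mathsf{N}_{\rm s},N}=\langle\boldsymbol{l},\vece^{N}\rangle$ (the additive constant $c$ cancels), the heart of the argument is to express $\langle\boldsymbol{l},\vece^{N}\rangle$ through the HF dual $\boldsymbol{\Psi}_\calM^{m}$ of \eqref{NewDualPb}.

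Next I would perform the discrete integration by parts in time. Pairing the error equation with $\boldsymbol{\Psi}_\calM^{m-1}$, using $\matM=\matM^{T}$ together with the dual recursion $(\matM+\Delta t\matA^{T})\boldsymbol{\Psi}_\calM^{m-1}=\matM\boldsymbol{\Psi}_\calM^{m}$ (valid for $m=1,\dots,N$), the terms telescope to
\[
\Delta t\sum_{m=1}^{N}\langle\vecr^{m},\boldsymbol{\Psi}_\calM^{m-1}\rangle=\langle\matM\vece^{0},\boldsymbol{\Psi}_\calM^{0}\rangle-\langle\vece^{N},\matM\boldsymbol{\Psi}_\calM^{N}\rangle.
\]
Invoking the terminal condition $\matM\boldsymbol{\Psi}_\calM^{N}=-\boldsymbol{l}$ and the exact reduction of the initial datum ($\vece^{0}=0$) yields the key identity $\langle\boldsymbol{l},\vece^{N}\rangle=\Delta t\sum_{m=1}^{N}\langle\vecr^{m},\boldsymbol{\Psi}_\calM^{m-1}\rangle$. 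Subtracting the correction term of \eqref{reducedOutput} (which is the same sum but with the \emph{reduced} dual $\boldsymbol{\Psi}^{\mathsf{N}_{\rm du},m-1}$) then gives
\[
\scas^{N}-\scas^{\mathsf{N}_{\rm s},N}=\Delta t\sum_{m=1}^{N}\langle\vecr^{m},\vecepsi^{m-1}\rangle,\qquad\vecepsi^{m-1}=\boldsymbol{\Psi}_\calM^{m-1}-\boldsymbol{\Psi}^{\mathsf{N}_{\rm du},m-1},
\]
so that the primal residual is weighted by the \emph{dual reduction error}, which is exactly what makes the estimate second order.

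It then remains to bound this bilinear expression. Estimating each pairing by the dual norm, $|\langle\vecr^{m},\vecepsi^{m-1}\rangle|\le\|\vecr^{m}\|_{-1}\,\|\vecepsi^{m-1}\|_{\bG^{*}}$ from \eqref{ResidualNormPrimal}, followed by the discrete Cauchy--Schwarz inequality in the time index, gives
\[
|\scas^{N}-\scas^{\mathsf{N}_{\rm s},N}|\le\Delta t\Big(\sum_{m=1}^{N}\|\vecr^{m}\|_{-1}^{2}\Big)^{1/2}\Big(\sum_{m=0}^{N-1}\|\vecepsi^{m}\|_{\bG^{*}}^{2}\Big)^{1/2}.
\]
For the canonical choice $\bG^{*}=\matM+\Delta t\,\matA_{\rm sym}^{*}$ the second factor is precisely $\vertiii{\vecepsi^{N}}_{\rm du}$, so Proposition~\ref{prop2} bounds it by $\Delta_{\rm du}^{N}$ and produces \eqref{QOIEst}. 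Finally \eqref{QOIEst2} follows by writing $\scas^{N}-\widetilde{\scas}^{\mathsf{N}_{\rm s},N}=(\scas^{N}-\scas^{\mathsf{N}_{\rm s},N})+(\scas^{\mathsf{N}_{\rm s},N}-\widetilde{\scas}^{\mathsf{N}_{\rm s},N})$ and bounding the second difference---the correction term itself---termwise by $\Delta t\sum_{n=0}^{N-1}|\langle\vecr^{n+1},\boldsymbol{\Psi}^{\mathsf{N}_{\rm du},n}\rangle|$.

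The main obstacle is the careful index bookkeeping in the time-telescoping: the dual is defined with a shift (\eqref{NewDualPb} versus the shifted $\boldsymbol{\psi}_{\calM,n}^{m}$), and one must verify that the terminal row $\matM\boldsymbol{\Psi}_\calM^{N}=-\boldsymbol{l}$ and the recursion align so that no boundary contribution other than $\langle\matM\vece^{0},\boldsymbol{\Psi}_\calM^{0}\rangle$ survives; the accompanying assumption $\vece^{0}=0$ (exact reduction of the initial state) is what removes that last term, and the identification of the correction with the dual reduction error $\vecepsi$ is what lets Proposition~\ref{prop2} be applied directly.
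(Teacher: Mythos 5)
Your proof follows essentially the same route as the paper's: the discrete duality/summation-by-parts identity $\scas^{N}-\scas^{\mathsf{N}_{\rm s},N}=\Delta t\sum_{m=1}^{N}\langle\vecr^{m},\vecepsi^{m-1}\rangle$, Cauchy--Schwarz in the time index, and an appeal to Proposition~\ref{prop2}; your bookkeeping (working directly at $n=N$ with the unshifted dual $\boldsymbol{\Psi}_\calM^{m}$, terminal condition $\matM\boldsymbol{\Psi}_\calM^{N}=-\boldsymbol{l}$, and $\vece^{0}=0$) is correct and in fact slightly cleaner than the paper's shifted-dual version. The one caveat is your final step: the identification $(\sum_{m=0}^{N-1}\|\vecepsi^{m}\|_{\bG^{*}}^{2})^{1/2}=\vertiii{\vecepsi^{N}}_{\rm du}$ holds only for the canonical choice $\bG^{*}=\matM+\Delta t\,\matA_{\rm sym}^{*}$, whereas for a general $\bG^{*}$ the paper instead inserts the coercivity bound $\alpha_{\bG}\|\vecepsi^{m}\|_{\bG^{*}}^{2}\le\langle(\matM+\Delta t\,\matA_{\rm sym})\vecepsi^{m},\vecepsi^{m}\rangle$ together with the intermediate inequality from the proof of Proposition~\ref{prop2} to reach $\Delta_{\rm du}^{N}$.
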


\begin{proof}
    See Appendix \ref{Aproof3}.
\end{proof}

The optimal selection between these two definitions, based on their accuracy and efficiency, will be elucidated in Section \ref{NumRes}. There, we present a comparative analysis of the numerical results obtained by POD-Greedy algorithms with $\Delta_s^N$ and $\tilde\Delta_s^N$, where the construction of $\boldsymbol{Z}_{\rm du}$ is simultaneous to that of $\boldsymbol{Z}_{\rm pr}$, see e.g. Algorithm \ref{alg:OutputPAlgo}.

\begin{algorithm}[htb]
 \begin{algorithmic}[1]
  \caption{POD-Greedy Algorithm with $\Delta_s^N$}
 \State \textbf{Procedure} $[\boldsymbol{Z}_{\rm{pr}},\boldsymbol{Z}_{\rm{du}}]=\textbf{POD-Greedy}(\mathsf{N}_{ \max},\epsilon_{\rm tol},\Xi, ric).$
  \State$ \mathsf{N}_{\rm pr}=0$, $ \mathsf{N}_{\rm du}=0.$ 
 \State $\delta^{ \mathsf{N}_{\rm s}}=\epsilon_{\rm tol}+1.$
 \State  Take $\xi_1 \in \Xi$, $\ell=1$ and set $\Xi^{\ell}=\{\xi_1\}.$
 \State Define $\boldsymbol{Z}_{\rm pr}=\emptyset$ and $\boldsymbol{Z}_{\rm du}=\emptyset$.
 \While{$\delta^{ \mathsf{N}_{\rm s}} > \epsilon_{\rm tol}$ and $\rm \mathsf{N}_{\rm pr} <  \mathsf{N}_{\max}$}.
 \State Compute $\vecp_\calM^{n}(\xi_{\ell})$ for $1\leq n \leq N.$
 \State Compute $\boldsymbol{\Psi}_\calM^{n}(\xi_{\ell})$ for $0\leq n \leq N-1.$
 \State Set $\mathbf{S}_{\rm{pr}} := \big[\vecp_\calM^{1}(\xi_{\ell})- {\rm{Proj}}_{\boldsymbol{Z}_{\rm {pr}}}\left(\vecp_\calM^{1}(\xi_{\ell})\right) \big| \ldots \big|\vecp_\calM^{N}(\xi_{\ell})- {\rm{Proj}}_{\boldsymbol{Z}_{\rm{ pr}}}\left(\vecp_\calM^{N}(\xi_{\ell})\right)\big]$.
  \State Set $\mathbf{S}_{\rm{du}} :=  \big[\boldsymbol{\Psi}_\calM^{0}(\xi_{\ell})- {\rm{Proj}}_{\boldsymbol{Z}_{\rm {du}}}\left(\boldsymbol{\Psi}_\calM^{0}(\xi_{\ell})\right)  \big| \ldots  \big|\boldsymbol{\Psi}_\calM^{N-1}(\xi_{\ell})- {\rm{Proj}}_{\boldsymbol{Z}_{\rm {du}}}(\boldsymbol{\Psi}_\calM^{N-1}(\xi_{\ell})) \big]$.
 \State  Compute $\widetilde{\boldsymbol{Z}}{}^{ \delta \mathsf{N}_{\rm pr}}_{\rm{pr}}=\textbf{POD}(\mathbf{S}_{\rm{pr}}, ric) $.
 \State  Compute $\widetilde{\boldsymbol{Z}}{}^{ \delta \mathsf{N}_{\rm du}}_{\rm{du}}=\textbf{POD}(\mathbf{S}_{\rm{du}}, ric) $.
  \State Define $\boldsymbol{Z}^{\mathsf{N}_{\rm pr} +\delta \mathsf{N}_{\rm pr} }_{\rm{pr}} := orthonormalize(\boldsymbol{Z}^{\mathsf{N}_{\rm pr}}_{\rm{pr}}\cup\{\widetilde{\boldsymbol{Z}}{}^{\delta \mathsf{N}_{\rm pr}}_{\rm{pr}} \})$ using Algorithm \ref{algo:GramSchmidt}.
   \State Define $\boldsymbol{Z}^{\mathsf{N}_{\rm du} +\delta \mathsf{N}_{\rm du} }_{\rm{du}} := orthonormalize(\boldsymbol{Z}^{\mathsf{N}_{\rm du}}_{\rm{du}}\cup\{\widetilde{\boldsymbol{Z}}{}^{\delta \mathsf{N}_{\rm du}}_{\rm{du}} \})$ using Algorithm \ref{algo:GramSchmidt}.
    \State Compute $\delta^{\mathsf{N}_{\rm s}}= \underset{\xi \in \Xi}{\max} \;\Delta_s^N.$
 \State Set $\xi_{\ell+1}=\arg\underset{\xi \in \Xi}{\max}\;\Delta_s^N.$
 \State  $\Xi^{ \ell+1} \gets \Xi^{\ell}\cup \{\xi_{\ell+1}\}.$
  \State  ${ \mathsf{N}_{\rm pr}}\gets {\mathsf{N}_{\rm pr}+\delta\mathsf{N}_{\rm pr}}.$
 \State  ${ \mathsf{N}_{\rm du}}\gets {\mathsf{N}_{\rm du}+\delta\mathsf{N}_{\rm du}}.$
 \State $\ell \gets \ell+1.$
 \EndWhile
 \label{alg:OutputPAlgo}
 \end{algorithmic}
 \end{algorithm}

\subsection{Computational aspects}\label{CompAspect}

Having addressed the offline computation of the reduced basis, we notice that for online computations, the reduced systems \eqref{primalReducedEq} and \eqref{dualReducedEq}, required to evaluate  \eqref{QOIEst}, \eqref{QOIEst2}, depend on  $\calN$. An affine decomposition strategy is a classical way to ensure the rapid assembly of the reduced system in the online stage. In the following, we also discuss the practical computation of the residual dual norm and the coercivity constant. 
 
 \paragraph{Affine decomposition.}
To construct the reduced matrix $\matA^{\sfN_{\rm pr}}\in \R^{\sfN_{\rm pr}\times\sfN_{\rm pr}}$ and reduced vector $\vecb^{\sfN_{\rm pr}}\in \R^{\sfN_{\rm pr}}$ defined as $\matA^{\mathsf{N}_{\rm pr}}=\boldsymbol{Z}_{\rm pr}^T\matA \boldsymbol{Z}_{\rm pr}$ and $\vecb^{\sfN_{\rm pr}}=\boldsymbol{Z}_{\rm pr}^T \vecb$ in equation \eqref{primalReducedEq}, we still need to compute $\matA$ and $\vecb$ depending on $\xi$. This evaluation, which depends on $\mathcal{N}$, is detrimental to the rapid online evaluation of the reduced basis solution when varying the parameter values. To accelerate the construction, we rewrite $ \matA$ and $\vecb$ as 
\begin{equation}\label{affineDecomp}
\matA=\sum_{d=1}^{\rm{D_a}} \JTar{\theta}_d^a(\xi)\matA_d, \qquad \vecb=\sum_{d=1}^{\rm{D_b}} \JTar{\theta}_d^b(\xi)\vecb_d.
\end{equation} 
In \eqref{affineDecomp}, $\matA_d \in \R^{\calN\times\calN}$, $1\leq d \leq {\rm{D_a}}$ and $\vecb_d\in \R^{\calN}$, $1\leq d \leq {\rm{D_b}}$ do not depend on $\xi$ and they are computed and stored once during the whole offline stage. Thus, for each new parameter $\xi$, we only have to compute the two sets of scalars $\{\JTar{\theta}_d^a(\xi)\}_{d=1}^{\rm{D_a}}$ and $\{ \JTar{\theta}_d^b(\xi)\}_{d=1}^{\rm{D_b}}$ and assemble $ \matA^{\mathsf{N}_{\rm pr}}$ and $ \vecb^{\mathsf{N}_{\rm pr}}$. This operation only depends on the dimension $\mathsf{N}_{\rm pr}$ of the reduced basis. Note that, in our case this affine decomposition does not exist. We therefore use the Empirical Interpolation Method (EIM) \cite{Maday2016ConvergenceAO,grepl2007efficient} (see also Appendix \ref{EIM}) to build such an approximation for $\matA$ and $\vecb$. Taking into account the definition of the numerical flux given by \eqref{eq:fluxApprox}, \eqref{eq:InterpolatedSol} and \eqref{U_ksigma},
we consider the vector $\Hat{\boldsymbol{v}}=\big((\alpha_{K,\sigma \sigma^\prime})_{K \in \calT,\sigma \in \calE_K,\sigma^\prime \in \calS_{K, \sigma} } ,   (\alpha_{K,\sigma \sigma^\prime} \omega_{M,\sigma^\prime})_{K \in \calT, M \in \calT_{\sigma^\prime}, \sigma \in \calE_K , \sigma^\prime \in \calS_{K, \sigma},\sigma^\prime \in \calE_{\rm int}}  \big)$
%$$\JTar{F= \sum_{\sigma \in \calE_{K}}\text{card}(\calS_{K,\sigma})} $$
and seek for a linearization of it depending on the parameter $\xi \in \Xi$ through the operator $\mathcal{I}_{\rm M_{EIM}}$ such that

$$ \Hat{\boldsymbol{v}}(\xi)\approx\sum_{d=1}^{\rm M_{EIM}} \theta_d(\xi) \tilde{\boldsymbol{v}}^d:=\mathcal{I}_{\rm M_{EIM}}[\Hat{\boldsymbol{v}}(\xi)],$$
where $\theta_d(\xi)\in \R$. 
If such an approximation exists, we can then replace the terms of each vector $\tilde{\boldsymbol{v}}^d$, $1\leq d \leq \rm{M_{EIM}}$ in the flux formula and obtain the matrices $\matA_d$ and the vectors $\vecb_d$, $1\leq d \leq \rm{M_{EIM}}$ independently from the parameter $\xi$. In terms of online cost, we need $\mathcal{O}((\rm{D_a}+1)\sfN_{\rm pr}^2)$ and $\mathcal{O}(\rm{D_b}\sfN_{\rm pr})$ to assemble the left-hand side and right-hand side respectively in \eqref{primalReducedEq}. The reduced system is then solved with $\mathcal{O}(N\sfN_{\rm pr}^3)$.

\paragraph{Residual norm evaluation.} Using the affine decomposition and the fact that $\|\vecr^{n+1}\|_{-1}^2=(\vecr^{n+1})^T(\boldsymbol{G}^{*})^{-1}\vecr^{n+1}$ (which results from Cauchy-Schwartz inequality), we can now rewrite the dual norm of the residual as
\begin{align}\label{ResidualTrad}
\|\vecr^{n+1}\|_{-1}^2&=\sum_{d=1}^{\rm{D}_b}\sum_{d'=1}^{\rm{D}_b}\theta_d^b(\xi)\theta_{d'}^b(\xi)\;\vecb_d^T (\boldsymbol{G}^{*})^{-1}\vecb_{d'} 
%\\ &
-2\sum_{d=1}^{\rm{D}_a}\sum_{d'=1}^{\rm{D}_b}\theta_d^a(\xi)\theta_{d'}^b(\xi)\, (\widetilde{\vecp}{}^{n+1})^T\boldsymbol{Z}_{\rm pr}^T\matA_d^T  (\boldsymbol{G}^{*})^{-1}\vecb_{d'}
\nonumber\\&
 + \sum_{d=1}^{\rm{D}_a}\sum_{d'=1}^{\rm{D}_a}\theta_d^a(\xi)\theta_{d'}^a(\xi)\,(\widetilde{\vecp}{}^{n+1})^T\boldsymbol{Z}_{\rm pr}^T\matA_d^T  (\boldsymbol{G}^{*})^{-1}\matA_{d'}\boldsymbol{Z}_{\rm pr}\widetilde{\vecp}^{n+1}
 \nonumber\\&
 -\frac{2}{\Delta t} \sum_{d=1}^{\rm D_b}\theta_d^b(\xi)\, (\widetilde{\vecp}{}^{n+1}-\widetilde{\vecp}{}^{n})^T\boldsymbol{Z}_{\rm pr}^T \matM  (\boldsymbol{G}^{*})^{-1}\vecb_{d}
 +\frac{2}{\Delta t} \sum_{d=1}^{\rm D_a}\theta_d^a(\xi)\, (\widetilde{\vecp}{}^{n+1}-\widetilde{\vecp}{}^{n})^T\boldsymbol{Z}_{\rm pr}^T \matM  (\boldsymbol{G}^{*})^{-1}\matA_{d}\boldsymbol{Z}_{\rm pr}\widetilde{\vecp}^{n+1}
 \nonumber \\&
 +\frac{1}{\Delta t^2}(\widetilde{\vecp}{}^{n+1}-\widetilde{\vecp}{}^{n})^T\boldsymbol{Z}_{\rm pr}^T\matM  (\boldsymbol{G}^{*})^{-1}\matM\boldsymbol{Z}_{\rm pr}(\widetilde{\vecp}{}^{n+1}-\widetilde{\vecp}{}^{n}).
\end{align}

Its evaluation is very sensitive to round-off errors as stressed in \cite{casenave2012accurate}. Hence, a naive implementation of \eqref{ResidualTrad} may suffer from accuracy issues. These can be circumvented by following the method of \cite{Buhr2014ANS}. We first introduce the Riesz's representative of the residual $\calR$ such that
$$ \left\langle \calR(\vecr^{n+1}) ,\vecv\right\rangle_{\bG^{*}}=\vecr^{n+1}(\vecv),\quad \forall \vecv \in \R^{\calN}. $$
Using the affine decomposition \eqref{affineDecomp}, the Riesz representation of the primal residual is given by
\begin{equation}\label{resAss1}
    \calR(\vecr^{n+1})=\frac{1}{\Delta t}\; (\bG^{*})^{-1} \matM \boldsymbol{Z}_{\rm pr} (\widetilde{\vecp}{}^{n+1}-\widetilde{\vecp}{}^{n} )+ \sum_{d=1}^{\rm{D}_a}\theta_d^a(\xi)(\bG^{*})^{-1}\matA_d\boldsymbol{Z}_{\rm{pr}}\widetilde{\vecp}^{n+1}-\sum_{d=1}^{\rm{D}_b}\theta_d^b(\xi)(\bG^{*})^{-1}\vecb_d.
\end{equation}
Setting ${\rm{D}_r}=\rm{D}_b+\rm{D}_a \mathsf{N}_{\rm pr}+\mathsf{N}_{\rm pr}$, we define the coefficient vector $\widehat{\vecr}^{n+1} \in \mathbb{R}^{\rm{D}_r}$ as 
$$ \widehat{\vecr}^{n+1}=  \bigg(
\frac{1}{\Delta t} (\widetilde{\vecp}^{n+1}-\widetilde{\vecp}^{n})^T, \, \theta_{1}^a(\xi) (\widetilde{\vecp}^{n+1})^T, \, \ldots, \, \theta_{\rm{D}_{a }}^a(\xi) (\widetilde{\vecp}^{n+1})^T, \, -\theta_1^b(\xi), \, \ldots, \, -\theta_{\rm{D}_b}^b(\xi) \bigg)^{\!T},$$
and the vector $\widehat{\boldsymbol{\eta}}\in \mathbb{R}^{\rm{D}_r}$ as 
$$ \widehat{\boldsymbol{\eta}}=
\big(
(\boldsymbol{G}^{*})^{-1}\matM\boldsymbol{Z}_{\rm{pr}}, \, (\boldsymbol{G}^{*})^{-1}\matA_1\boldsymbol{Z}_{\rm{pr}}, \, \ldots, \, (\boldsymbol{G}^{*})^{-1}\matA_{\rm{D}_a }\boldsymbol{Z}_{\rm{pr}}, \, (\boldsymbol{G}^{*})^{-1}\vecb_1, \, \ldots, \, (\boldsymbol{G}^{*})^{-1}\vecb_{\rm{D}_{b}}
\big)^T.$$
The Riesz representative is then written as 
\begin{equation}\label{resAss2}
  \mathcal{R}(\vecr^{n+1})=   \sum_{d=1}^{\rm{D_r}}\widehat{\vecr}{}^{n+1}_d\widehat{\boldsymbol{\eta}}_d,
\end{equation}
and the norm is given by
\begin{equation}\label{RieszResidual}
    \|\mathcal{R}(\vecr^{n+1})\|^2_{\boldsymbol{G}^*}= \Big \langle\sum_{d=1}^{\rm{D_r}} \widehat{\vecr}{}^{n+1}_d \widehat{\boldsymbol{\eta}}_d, \sum_{d=1}^{\rm{D_r}}\widehat{r}{}^{n+1}_d \widehat{\boldsymbol{\eta}}_d \Big\rangle_{\!\boldsymbol{G}^*}.
\end{equation}
The evaluation of \eqref{RieszResidual} is divided into three steps:
\begin{enumerate}
    \item We construct an orthonormal basis $\boldsymbol{\zeta}$ of $ \widehat{\boldsymbol{\eta}}$ by applying a modified Gram-Schmidt algorithm with reorthogonalization (see Algorithm \ref{algo:GramSchmidt}).
    \item We evaluate each term $\widehat{\boldsymbol{\eta}}_d=\sum \limits_{i=1}^{\rm{D_r}}\overline{\boldsymbol{\eta}}_{d,i}\boldsymbol{\zeta}_i$, with $\overline{\boldsymbol{\eta}}_{d,i}=\left\langle \widehat{\boldsymbol{\eta}}_d,\boldsymbol{\zeta}_i\right\rangle_{\boldsymbol{G}^*}$.
    \item We compute \eqref{RieszResidual} using 
    \begin{equation}\label{ResidualConst2}
\begin{split}
     \|\mathcal{R}(\vecr^{n+1})\|_{\boldsymbol{G}^*}^2 &=\Big \langle \sum_{d=1}^{\rm{D_r}}  \widehat{{\vecr}}{}_d^{n+1} \Big (\sum_{i=1}^{\rm{D_r}} \overline{\boldsymbol{\eta}}_{d,i} \boldsymbol{\zeta}_i \Big),\sum_{d=1}^{\rm{D_r}}  \widehat{{\vecr}}{}_d^{n+1} \Big( \sum_{i=1}^{\rm{D_r}} \overline{\boldsymbol{\eta}}_{d,i} \boldsymbol{\zeta}_i \Big ) \Big \rangle_{\!\boldsymbol{G}^*}\\
     &= \sum_{i=1}^{\rm{D_r}} \sum_{j=1}^{\rm{D_r}}
     \Big(\sum_{d=1}^{\rm{D_r}}  \widehat{{\vecr}}{}_d^{n+1} \overline{\boldsymbol{\eta}}_{d,i} \Big) 
     \Big(\sum_{d=1}^{\rm{D_r}}  \widehat{{\vecr}}{}_d^{n+1} \overline{\boldsymbol{\eta}}_{d,j} \Big) 
     \big\langle \boldsymbol{\zeta}_i, \boldsymbol{\zeta}_j \big\rangle_{\!\boldsymbol{G}^*} 
     \\
     &= \sum_{i=1}^{\rm{D_r}} \Big(\sum_{d=1}^{\rm{D_r}}  \widehat{{\vecr}}{}_d^{n+1} \; \overline{\boldsymbol{\eta}}_{d,i} \Big)^{\!2}.
\end{split}         
\end{equation}
\end{enumerate}
Steps 1 and 2 are performed in the offline stage, while steps 3 is completed during the online stage. We follow the same strategy to define to residual dual norm of the dual problem. 

\paragraph{Coercivity constant computation.}
The coercivity constant defined by \eqref{alphaE} is the minimum of the generalized Rayleigh quotient and we have that $\alpha_{\matA_{\rm sym}}$ is the smallest eigenvalue of the following generalized eigenvalue problem
\begin{equation}\label{proofAlpha}
\matA_{\rm sym}\boldsymbol{v} = \lambda \, \boldsymbol{G}^* \boldsymbol{v}.
\end{equation}
%We left multiply \eqref{proofAlpha} by $\boldsymbol{G}^{-1/2}$ and take the change of variable $\boldsymbol{w}=(\boldsymbol{G}^*)^{1/2}\boldsymbol{v}$. This implies that
%\begin{equation}
%\alpha_{\matA_{\rm sym}}= \lambda_{\min}(\boldsymbol{G}^{-1/2} \matA_{\rm sym} %\boldsymbol{G}^{-1/2}).
%\end{equation} 
To avoid the resolution of the generalized eigenvalue problem \eqref{proofAlpha} which requires for instance $\mathcal{O}(\calN^{3})$ using a QR algorithm, %\emph{Eigen}(\textcolor{red}{mention the algorithm not the library}) \footnote{\url{https://eigen.tuxfamily.org/dox/classEigen\_1\_1SelfAdjointEigenSolver.html}} library,
we consider the successive constraint method (SCM) \cite{CHEN20081295,HUYNH2007473} (see also Appendix \ref{SCM}) which, using \eqref{affineDecomp}, provides an upper bound $\alpha_{\matA_{\rm sym}, \rm UB}(\xi)\in \R$ and a lower bound $\alpha_{\matA_{\rm sym}, \rm LB}(\xi)\in \R$ for the coercivity constant such that
$$\alpha_{\matA_{\rm sym}, \rm LB}(\xi) \leq \alpha_{\matA_{\rm sym}}(\xi)\leq \alpha_{\matA_{\rm sym}, \rm UB}(\xi). $$
The evaluation of these bounds do not depend on $\calN$.
The coercivity constant $\alpha_{\matA_{\rm sym}}$ in the a posteriori estimation formula is then replaced by its corresponding lower bounds. Noting that, once $\alpha_{\matA_{\rm sym}, \rm LB}(\xi)$ is computed, we can replace it in \eqref{CHECK} to obtain a lower bound for $\alpha_{\bG}$.

\section{Numerical results}\label{NumRes}
In this section, we numerically validate the theoretical results obtained for the reduction of problem \eqref{SF}. Our main goals are to study both efficiency and computation cost of the proposed estimators. 

The following parameters are considered:
\begin{alignat*}{4}
\mu & =1.5\times 10^{-5}\;\text{Pa}. \text{s}, \quad & c_t & =1.4\times 10^{-7} \;\text{Pa}^{-1}, \quad & g &=9.81\;\text{m}/\text{s}^2, \quad & \rho &=700\; \text{kg}/\text{m}^3,  \\
 p_{bh} &=4.13\times 10^7\; \text{Pa}, \quad & \phi &=0.2, \quad & r_w &=0.1, \quad & z_{bh} &=0\; \text{m}.
\end{alignat*}
We use $p_{\rm D}=10^{5}$ Pa as Dirichlet boundary condition. The total duration of the simulation is $T=200$ days and the time step is $\Delta t=10$ days. The initial pressure is defined by 
$$ p_K^0=p_{\rm D}-\rho g (z_K-z_{\rm D}),$$
where $z_{\rm{D}}=80$ m.

We consider a three-dimensional domain
\[
\Omega=[ -2.686 \cdot 10^{-3} \;\rm{m},1996 \;\rm{m}]\times [6.1 \cdot 10^{-5}\;\rm{m},1996\;\rm{m}]\times [-1000.13\;\rm{m}, 2.686 \cdot 10^{-3}\;\rm{m}]
\]
(see Figure \ref{fig:3DDomain}), where an anticline is located in the middle. In depth, a high permeability zone whose values $\kappa_1$ belong to $[10^{-13},10^{-12}]$ is surrounded by two impermeable over- and under- burdens where the permeability $\kappa_2$ is in the range  $[10^{-17},10^{-15}]$. To represent this geometry, a \emph{Corner Point Grid} (CPG)
with hexahedra and non-planar faces, is used. The number of cells $\calN$ is equal to $15210$. A well is located in the center of $\Omega$ and perforated along $27$ cells whose centers lie within the bounding box $[945.819,1049.83]\times[946.32,1049.62]\times[-715.73,-537.624]$. The boundary $\Gamma_{\rm int}$ is given by
\begin{alignat*}{3}
\Gamma_{\rm int} &=  \{818.87\}  \times[818.87, 1125.95]  \times [-816.148,-490.622]\\
        & \, \cup  \{1125.95\}  \times[818.87, 1125.95]  \times [-816.148,-490.622] \\
        & \, \cup  [818.87, 1125.95] \times \{818.87\}  \times [-816.148,-490.622]\\
        &  \,\cup  [818.87, 1125.95] \times  \{1125.95\}\times [-816.148,-490.622]\\
        &  \,\cup  [818.87,1125.95]  \times [818.87, 1125.95]\times \{-816.148\}\\
        &  \, \cup  [818.87,1125.95] \times [818.87, 1125.95] \times\{-490.622\}.
\end{alignat*}

To apply the EIM, SCM and Greedy processes, a sample of parameter values
$\Xi_{\rm training}=\{\xi_1, \ldots,\xi_{\calL}\}$ is generated by randomly choosing $\xi=\{\kappa_1,\kappa_2\}$ from their ranges and by taking  $\calL=100$. The distribution of the values is shown in Figure \ref{fig:permeability}. $\Xi_{\rm training}$ is used in the offline stage and a new sampling set $\Xi_{\rm test}$ is introduced in the online stage to validate the previous processes and, in particular, control the quality of the EIM and SCM. $\Xi_{\rm test}$ is constructed using unexplored parameters $\kappa_1$ and $\kappa_2$ from the same range of values given above. 
\begin{comment}
 \begin{table}[H]
    \centering
    \renewcommand{\arraystretch}{1.2}
    \begin{tabular}{c|c|c|c} 
    \hline \hline
 \rule{0pt}{12pt}Number of perforated cells    &$x$&  $ y$ &   $z$ \\  \hline  
1 & 946.901 & 946.713& -546.383\\ 
2& 946.356 & 946.814 & -606.417\\ 
3& 946.951 & 946.919 & -715.73\\ 
4& 946.928 & 998.012 & -541.671\\ 
5 & 945.923 & 997.983 & -601.247\\ 
6& 947.011 & 998.045 & -712.041\\ 
7& 946.884 & 1049.29 & -545.126\\ 
8& 945.819 & 1049.41 & -605.77\\ 
9& 947.054 & 1049.02 & -715.535\\ 
10& 998.097 & 946.762 & -541.765\\ 
11& 997.998 & 946.511 & -602.003\\ 
12& 997.991 & 946.945 & -711.999\\ 
13& 998.125 & 998.018 & -537.624\\ 
14& 997.878 & 997.698 & -596.681\\ 
15& 997.967 & 998.054 & -707.889\\ 
16& 998.064 & 1049.18 & -541.299\\ 
17& 997.928 & 1049.62 & -600.743\\ 
18& 997.956 & 1049.04 & -711.12\\ 
19& 1049.04 & 946.754 & -546.037\\ 
20& 1049.83 & 946.532 & -607.256\\ 
21& 1049.04 & 946.949 & -716.47\\ 
22& 1049.09 & 998.097 & -541.965\\ 
23& 1049.83 & 997.548 & -602.146\\ 
24& 1049.04 & 998.03 & -712.032\\ 
25& 1049.19 & 1049.15 & -545.879\\ 
26& 1049.63 & 1049.62 & -606.426\\ 
27& 1049.02 & 1049.03 & -714.902\\ 
\hline\hline
    \end{tabular}
    \caption{Center of perforated cells. }
    \label{tab:perforatedCellsCenters}
\end{table}   
\end{comment}

\begin{figure}[H]
    \centering
    \includegraphics[scale=0.8]{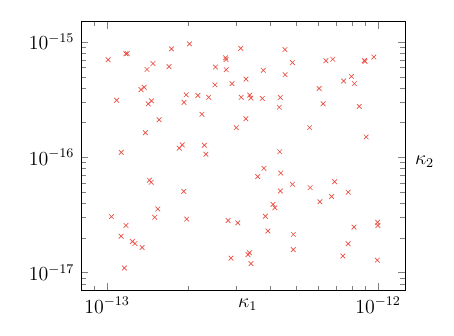}
    \caption{Permeabilities' distribution used for the offline stage}
    \label{fig:permeability}
\end{figure}
\begin{center}
    \begin{figure}[H]
   \begin{subfigure}[b]{0.49\linewidth}
    \centering
     \captionsetup{justification=raggedright,margin={0.8cm,0cm}}
    \includegraphics[width=0.71\textwidth]{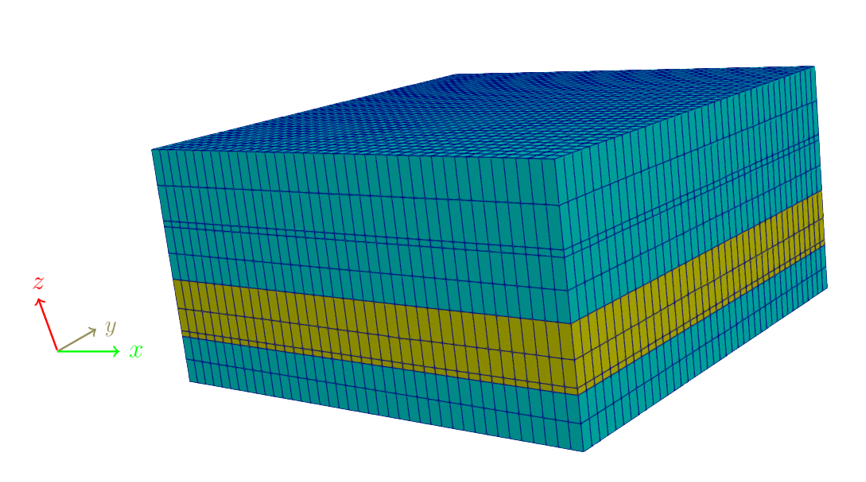}
  \caption{3D domain} 
   \end{subfigure}
   \hspace{-2.5cm}
     \begin{subfigure}[b]{0.58\linewidth}
    \centering
     \captionsetup{justification=raggedleft,margin={0cm,0.8cm}}
    \includegraphics[width=0.58\textwidth]{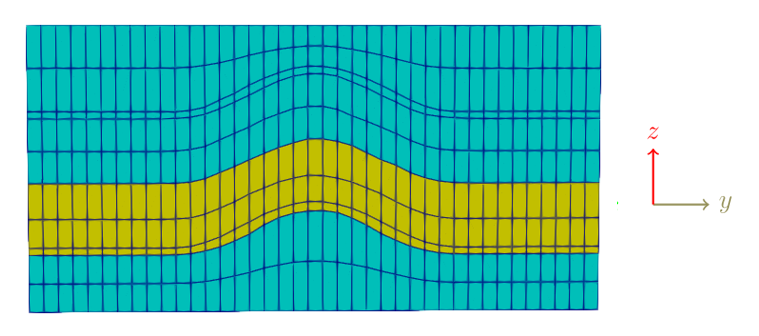}
  \caption{2D slice} 
   \end{subfigure}
   \caption{Spatial repartition of the permeabilities within $\Omega$: the yellow zone includes cells having the high permeability value $\kappa_1$ and the low permeability value $\kappa_2$ is used in the blue zone.}
       \label{fig:3DDomain}
\end{figure}
\end{center}
\subsection{Affine decomposition of the scheme coefficients}
To construct the reduced model, we start by applying the EIM as discussed in Section \ref{CompAspect}. We plot in Figure \ref{fig:EIMError} the evolution of the interpolation error defined as
\begin{equation}\label{maxInterpError}
   e_{\rm M,\max}^{\infty}=\max_{\xi \in \Xi_{\rm training}} \frac{\left \| \Hat{\boldsymbol{v}}(\xi)- \mathcal{I}_{\rm M}[ \Hat{\boldsymbol{v}}(\xi)]\right\|_{L^{\infty}} }{\|\Hat{\boldsymbol{v}}(\xi)\|_{L^{\infty}}},
\end{equation}
with respect to the number of parameters $\rm M$. The final number of selected parameters is $\rm M_{EIM}=10$. 
\begin{figure}[H]
    \centering
    \includegraphics[scale=0.9]{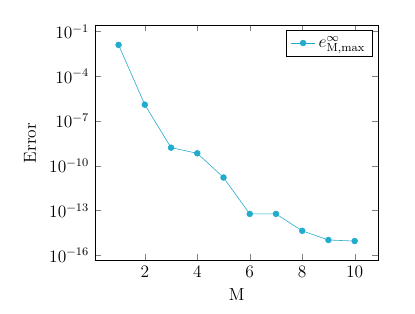}
    \caption{Evolution of the EIM interpolation error \eqref{maxInterpError} with respect to the number of selected parameters $\rm M$.}
    \label{fig:EIMError}
\end{figure}
To evaluate the accuracy of the linearization with $\rm M_{EIM}=10$, we compute the maximum relative error for new values of $\xi \in \Xi_{\rm test}$. On this sampling, we obtained $$\max_{\xi \in \Xi_{\rm test}} \frac{\left \| \Hat{\boldsymbol{v}}(\xi)- \mathcal{I}_{\rm M_{EIM}}[ \Hat{\boldsymbol{v}}(\xi)]\right\|_{L^{\infty}} }{\|\Hat{\boldsymbol{v}}(\xi)\|_{L^{\infty}}}=4.25\cdot 10^{-16}.$$

\subsection{Bounds on the coercivity constants}
We consider the successive constraint method to compute a lower bound for the coercivity constant $\alpha_{\matA_{\rm sym}}$ defined in \eqref{alphaE}. We use the same training set $\Xi_{\rm training}$ and the affine decomposition obtained with the EIM. For this test, we have used $\rm M_1=M_2=5$ and $\rm tol=1e-04$ (see Appendix \ref{SCM}). Since $\rm{M_{EIM}}=10$, we have to solve 10 eigenvalue problems to define $\boldsymbol{\mathcal{B}}$. The offline greedy algorithm \ref{alg:SCMAlgo} generates a parameter set $\Xi_{\rm M}$ of dimension $\rm M=20$.
Again to check the quality of the SCM result in the online stage, we compute lower and upper bounds for $\alpha_{\matA_{\rm sym}}$ for both samplings. More precisely, we compute the values of the ratio
$$ r_{\matA_{\rm sym}}(\xi)=\frac{\alpha_{\matA_{\rm sym}}(\xi)-\alpha_{\matA_{\rm sym},\rm LB}(\xi)}{\alpha_{\matA_{\rm sym},\rm UB}(\xi)-\alpha_{\matA_{\rm sym},\rm LB}(\xi)},$$
and observe that $ r_{\matA_{\rm sym}}$ is in the range $ [0.999,1.00479]$
for all $\xi$ belonging to $\Xi_{\rm training}$ and $\Xi_{\rm test}$.

\subsection{POD-Greedy algorithm}
As a first test, we construct  the reduced model obtained with a POD-Greedy algorithm and the a posteriori estimator $\Delta_{\rm{pr}}^N$. We define the following errors
\begin{equation*}
    \vece_{\rm{pr, \max}}^N = \max_{\xi \in \Xi} \vertiii{\vece^N}_{\rm pr},  \qquad \Delta_{\rm{pr,\max}}^N = \max_{\xi \in \Xi} \;\Delta_{\rm{pr}}^N, \qquad \eta_{\rm{pr,max}}^N =\max_{\xi \in \Xi} \;  \frac{\Delta_{\rm{pr}}^N}{\vertiii{\vece^N}_{\rm pr}},
\end{equation*}
where $\vece^N$ and $\Delta_{\rm{pr}}^N$ are given by \eqref{primalRedError} and \eqref{PrimalEstimator} respectively. 
Figure \ref{fig:ErrorsP} shows the evolution of the a posteriori error estimator $\Delta_{\rm{pr,\max}}^N $ along with the true error $\vece_{\rm{pr, \max}}^N $ with respect to the basis dimension $\mathsf{N}_{\rm pr}$. In Figure \ref{fig:ErrorPOffline}, the training parameter set $\Xi_{\rm training}$ is used to evaluate these error indicators, while $\Xi_{\rm test}$ is used in Figure \ref{fig:ErrorPOnline} following the same sequence of introduction of basis vectors as in POD-Greedy process. The results confirm that the proposed estimator is reliable as it forms an upper bound of the true error in both cases. In the offline phase, the POD-Greedy algorithm generates a basis of dimension $N_{\rm pr}=92$, for which the maximum relative error defined as 
$$E_{\rm{pr,\max}}^N=\max_{\xi \in \Xi}\;\frac{\vertiii{\vece^N}_{\rm pr}}{\vertiii{\vecp_\calM^{N}}_{\rm pr}}, $$
reaches $4\cdot 10^{-10}$. To analyse the efficiency of the estimator, we detail in Table \ref{tab:EffectPODGreedyPrimal} the value of the effectivities $\eta_{\rm{pr,max}}^N$ in the offline stage for different basis dimensions. We can see that the effectivities are quite good $\mathcal{O}(3)$ and the estimator can be safely used to replace the true error.
\begin{figure}[H]
    \centering
   \begin{subfigure}[b]{0.39\linewidth}
    \centering
            \captionsetup{justification=raggedright,margin={1cm,0cm}}
     \includegraphics[width=\textwidth]{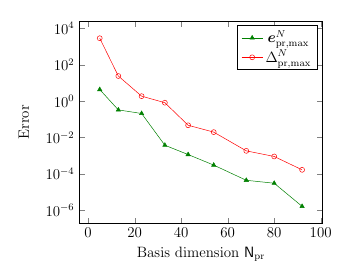}
 \caption{Offline phase} 
  \label{fig:ErrorPOffline}
   \end{subfigure}
     \begin{subfigure}[b]{0.39\linewidth}
    \centering
            \captionsetup{justification=raggedright,margin={0.6cm,0cm}}
        \includegraphics[width=\textwidth]{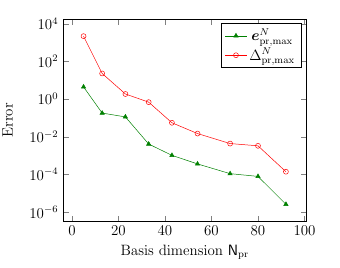}
  \caption{Online phase} 
  \label{fig:ErrorPOnline}
   \end{subfigure}
    \caption{Maximum true 
    and estimated errors as functions of the basis dimension for the primal problem using the parameter samplings $\Xi_{\rm training} $ on the left and $\Xi_{\rm test}$ on the right. }
    \label{fig:ErrorsP}
\end{figure}

\begin{table}[H]
    \centering
    \renewcommand{\arraystretch}{1.2}
    \begin{tabular}{c||c|c} 
    \hline \hline
 \rule{0pt}{12pt}$\mathsf{N}_{\rm pr}$    &  $E_{\rm{pr,\max}}^N$ &$\eta_{\rm{pr,max}}^N$\\  \hline  
5   &        1.04e-03  &   658.1 \\
13   &       7.94e-05 &    1061.3 \\
23      & 5.05e-05  &   2327.8 \\
33     &  9.12e-07 &    1549.5 \\
43       & 2.81e-07   &  1400.9 \\
54    &   7.39e-08   &  1376.7\\
68     &  1.08e-08  &   677.4\\
80     & 7.46e-09   &  447.1\\
92       &   4e-10   &  392\\
\hline\hline
    \end{tabular}
    \caption{Effectivities of the primal a posteriori error estimate with respect to the basis dimension $\mathsf{N}_{\rm pr}$ in the offline stage. }
    \label{tab:EffectPODGreedyPrimal}
\end{table}
In addition, we compare in Figure \ref{fig:ErrorsPrimalComp}, the evolution of $\vece_{\rm{pr, \max}}^N$ with respect to the basis dimension $\sfN_{\rm pr}$ using a POD-Greedy algorithm driven by different choices of the a posteriori error estimator, which are the ones presented in \cite{Haasdonk2008} and defined as 
\begin{equation}\label{EstGHO1}
    \Bar{\Delta}_{\rm pr,1,max}=\Big(\sum_{n=1}^{ N} \frac{\Delta t }{ \alpha_{\matA_{\rm sym}}} \|\vecr^n\|_{-1}^2\Big)^{1/2}, \quad \text{for} \;\; \boldsymbol{G}^*=\matA^*,
\end{equation}
and 
\begin{equation}\label{EstGHO2}
    \Bar{\Delta}_{\rm pr,2,max}=\Big(\sum_{n=1}^{ N} \frac{\Delta t }{ \alpha_{\matA_{\rm sym}}} \|\vecr^n\|_{-1}^2\Big)^{1/2}, \quad \text{for} \;\; \boldsymbol{G}^*=\matM +\Delta t \matA^*.
\end{equation}
For $\sfN_{\rm pr}<60$, $\vece_{\rm{pr, \max}}^N$ behaves in the same way as for the three choices. For $\sfN_{\rm pr}>60$, a slight difference appears between the three curves. This trend occurs for both $\Xi_{\rm training}$ and $\Xi_{\rm test}$.
\begin{figure}[H]
    \centering
   \begin{subfigure}[b]{0.40\linewidth}
    \centering
            \captionsetup{justification=raggedright,margin={1.2cm,0cm}}
    \includegraphics[width=\textwidth]{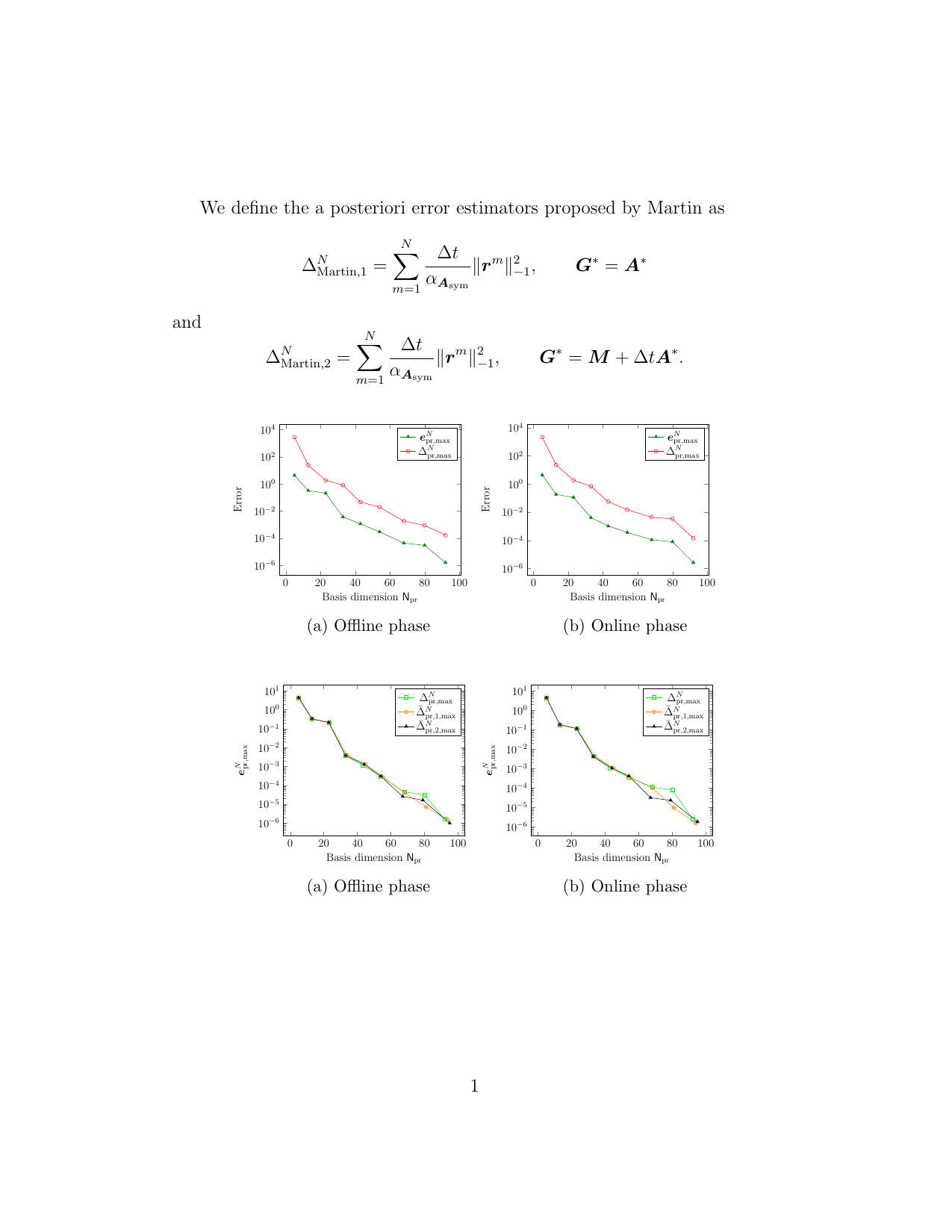}
 \caption{Offline phase} 
  \label{fig:ErrorsPrimalCompOffline}
   \end{subfigure}
     \begin{subfigure}[b]{0.40\linewidth}
    \centering
            \captionsetup{justification=raggedright,margin={1.2cm,0cm}}
    \includegraphics[width=\textwidth]{ComparisionEstPrimal3DOffline.pdf}
  \caption{Online phase} 
  \label{fig:ErrorsPrimalCompOnline}
   \end{subfigure}
    \caption{Maximum true
    error as a function of the primal basis dimension using $\Xi_{\rm training} $ on the left and $\Xi_{\rm test}$ on the right for a POD-Greedy algorithm driven by \eqref{PrimalEstimator}, \eqref{EstGHO1} and \eqref{EstGHO2}.}
    \label{fig:ErrorsPrimalComp}
\end{figure}

Next, we build a reduced output by first considering the choice \eqref{reducedOutput}. We use a POD-Greedy algorithm detailed in Algorithm \ref{alg:OutputPAlgo} along with the a posteriori error estimator \eqref{QOIEst}. We compare, in that case, the evolution of $\vece_{\rm{s, \max}}^N$ and $\Delta_{\rm{s, \max}}^N$ defined as 
$$\vece_{\rm{s}}^N =  |\scas^N-\scas^{\mathsf{N}_{\rm s},N}|, \qquad \vece_{\rm{s, \max}}^N = \max_{\xi \in \Xi} |\scas^N-\scas^{\mathsf{N}_{\rm s},N}|,  \qquad \Delta_{\rm{s,\max}}^N =\max_{\xi \in \Xi} \;\Delta_{\rm{s}}^N, $$
with respect to the primal basis dimension $\sfN_{\rm pr}$ using $\Xi_{\rm training}$ and $\Xi_{\rm test}$. The results are given in Figure \ref{fig:ErrorsQOI}. We notice that, although the reliability of the estimator is verified, $\Delta_{\rm{s}}^N$ is not efficient and the effectivities 
$$\eta_{\rm{s,max}}^N =\max_{\xi \in \Xi} \;  \frac{\Delta_{\rm{s}}^N}{{\vece}_{\rm s}^N}  $$
are quite large as shown in Table \ref{tab:QOIFistEst}.

\begin{figure}[H]
    \centering
   \begin{subfigure}[b]{0.40\linewidth}
    \centering
            \captionsetup{justification=raggedright,margin={1.0cm,0cm}}
        \includegraphics[width=\textwidth]{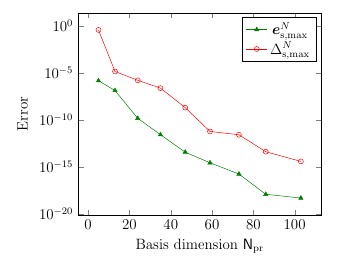}
 \caption{Offline phase} 
  \label{fig:ErrorQOIOffline}
   \end{subfigure}
     \begin{subfigure}[b]{0.40\linewidth}
    \centering
            \captionsetup{justification=raggedright,margin={1.2cm,0cm}}
       \includegraphics[width=\textwidth]{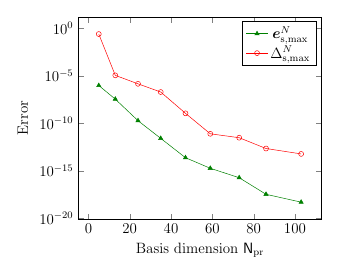}
  \caption{Online phase} 
  \label{fig:ErrorQOIOnline}
   \end{subfigure}
    \caption{Maximum true and estimated errors for the first choice of the reduced output as functions of the  primal basis dimension using the parameter samplings $\Xi_{\rm training} $ on the left and $\Xi_{\rm test}$ on the right. }
    \label{fig:ErrorsQOI}
\end{figure}

\begin{table}[H]
    \centering
        \renewcommand{\arraystretch}{1.2}
    \begin{tabular}{c|c|c} 
    \hline \hline
 \rule{0pt}{12pt}$\mathsf{N}_{\rm pr}$ &$\mathsf{N}_{\rm du}$   &   $\eta_{\rm{s,max}}^N$\\\hline  
%5 & 10           &   1.41e+06      \\ 
%13 & 19    &        3.83e+06     \\ 
%23 & 32      &     3.82e+06      \\ 
%34 & 44    &       6.49e+06 \\      
%45 & 56        &     3.51e+06     \\   
%57 & 68      &     9.14e-06   \\ 
%67 & 80         &  5.08e+06     \\
%79 & 93       &       2.32e+06    \\  
%93 & 103     &      4.84e+07     \\

   5 & 10           &     2.11e+06     \\ 
13 & 19    &        8.08e+06     \\ 
24 & 32      &      9.83e+06      \\ 
35 & 44    &       1.27e+08 \\      
47 & 56        &    7.21e+06     \\   
59 & 69      &     4.95e+06  \\ 
73 & 84         & 1.47e+08    \\
86 & 98       &    2.72e+08    \\  
103 & 114     &     1.08e+08    \\
\hline\hline
    \end{tabular}
 \caption{Effectivities obtained with the posteriori error estimator \eqref{QOIEst} with respect to the primal and dual basis dimensions $\mathsf{N_{\rm pr}}$ and $\mathsf{N_{\rm du}}$  in the offline stage. }
    \label{tab:QOIFistEst}
\end{table}
We also observe that $\eta_{\rm{s,max}}^N $ becomes less efficient as the final simulation time is increased from $T=10$ days to $T=100$ days (see Figure \ref{fig:ErrorsQOITime}).

\begin{figure}[H]
    \centering
   \begin{subfigure}[b]{0.40\linewidth}
    \centering
            \captionsetup{justification=raggedright,margin={1.1cm,0cm}}
    \includegraphics[width=\textwidth]{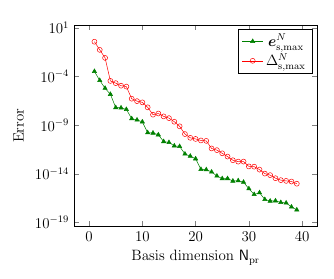}

 \caption{$T=10$ days} 
  \label{fig:ErrorQOIOfflineT10}
   \end{subfigure}
     \begin{subfigure}[b]{0.39\linewidth}
    \centering
            \captionsetup{justification=raggedright,margin={1.2cm,0cm}}
    \includegraphics[width=\textwidth]{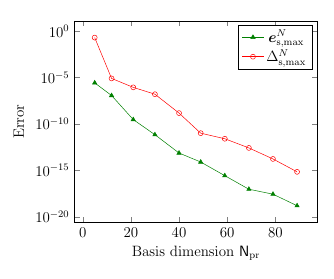}
  \caption{$T=100$ days} 
  \label{fig:ErrorQOIOnlineT100}
   \end{subfigure}
  \caption{Maximum true and estimated errors for the first choice of the reduced output as functions of the  primal basis dimension with $T=10$ days (on the left) and $T=100$ days (on the right).}
    \label{fig:ErrorsQOITime}
\end{figure}
 We also consider the output estimator presented in \cite{grepl} as 
\begin{equation}\label{EstGHOQOI1}
    \Bar{\Delta}_{\rm s,1,max}=\Big(\sum_{n=1}^{ N} \frac{\Delta t }{ \alpha_{\matA_{\rm sym}}} \|\vecr^n\|_{-1}^2\sum_{n=1}^{ N} \frac{\Delta t }{ \alpha_{\matA_{\rm sym}}} \|\vecrho^n\|_{-1}^2\Big)^{1/2}, \quad \text{for} \;\; \boldsymbol{G}^*=\matA^*,
\end{equation}
and 
\begin{equation}\label{EstGHOQOI2}
    \Bar{\Delta}_{\rm s,2,max}=\Big(\sum_{n=1}^{ N} \frac{\Delta t }{ \alpha_{\matA_{\rm sym}}} \|\vecr^n\|_{-1}^2\sum_{n=1}^{ N} \frac{\Delta t }{ \alpha_{\matA_{\rm sym}}} \|\vecrho^n\|_{-1}^2\Big)^{1/2}, \quad \text{for} \;\; \boldsymbol{G}^*=\matM+\Delta t \matA^*,
    \end{equation}
and compare in Figure \ref{fig:ErrorsQOIComp} the evolution of ${\vece}_{\rm s,\max}^N$ as a function of primal basis dimension $\sfN_{\rm pr}$ using a POD-Greedy algorithm controlled by \eqref{QOIEst}, \eqref{EstGHOQOI1} and \eqref{EstGHOQOI2}. We observe that the true error is exactly the same using \eqref{QOIEst} and \eqref{EstGHOQOI2}.
\begin{figure}[H]
    \centering
   \begin{subfigure}[b]{0.40\linewidth}
    \centering
            \captionsetup{justification=raggedright,margin={1.2cm,0cm}}
    \includegraphics[width=\textwidth]{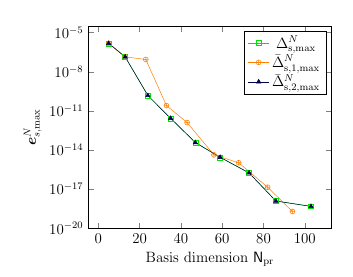}
 \caption{Offline phase} 
  \label{fig:ErrorsQOICompOffline}
   \end{subfigure}
     \begin{subfigure}[b]{0.40\linewidth}
    \centering
            \captionsetup{justification=raggedright,margin={0.8cm,0cm}}
    \includegraphics[width=\textwidth]{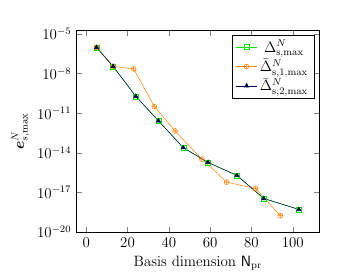}
  \caption{Online phase} 
  \label{fig:ErrorsQOICompOnline}
   \end{subfigure}
    \caption{Estimated errors for the first choice of the reduced output as a function of the primal basis dimension for a POD-Greedy algorithm driven by \eqref{QOIEst}, \eqref{EstGHOQOI1} and \eqref{EstGHOQOI2} using $\Xi_{\rm training}$ on the left and $\Xi_{\rm test}$ on the right.}
    \label{fig:ErrorsQOIComp}
\end{figure}

We now employ the second definition of the output error \eqref{reducedOutput2} and estimator \eqref{QOIEst2} and analyze the evolution of the POD-Greedy algorithm in the offline and online stages. The results are shown in Figure \ref{fig:ErrorsQOINewEst}. We define the following errors
$$\widetilde{\vece}_{\rm{s}}^N =  |\scas^N-\widetilde{\scas}^{\mathsf{N}_{\rm s},N}| , \qquad \widetilde{\vece}_{\rm{s, \max}}^N = \max_{\xi \in \Xi} |\scas^N-\widetilde{\scas}^{\mathsf{N}_{\rm s},N}|,  \qquad \widetilde{\Delta}_{\rm{s,\max}}^N =\max_{\xi \in \Xi} \;\widetilde{\Delta}_{\rm{s}}^N,  \qquad  \widetilde{\eta}_{\rm{s,max}}^N =\max_{\xi \in \Xi} \;  \frac{\widetilde{\Delta}_{\rm{s}}^N}{\widetilde{\vece}_{\rm s}^N} .$$
Table \ref{tab:QOISecondEst} shows that the proposed estimator is very close to the true error and behaves better in terms of effectivity compared to ${\eta}_{\rm{s,max}}^N$: indeed, from $\sfN_{\rm pr}=59$, $\widetilde{\eta}_{\rm{s,max}}^N$ behaves as $\mathcal{O}(1)$. However, we observe that the first choice of the reduced output \eqref{reducedOutput} and the corresponding a posteriori error estimate \eqref{QOIEst} provide better results in terms of accuracy and size of the basis dimensions: to obtain a precision of $1e-10$, we need a primal basis of dimension $\sfN_{\rm pr}=24$ and a dual basis of dimension $\sfN_{\rm du}=32$ with the first choice while the second choice requires bases whoses sizes are $\sfN_{\rm pr}=72$ and $\sfN_{\rm du}=84$. %Thus, it is important to recognize that if our interest relies on a sharp output bound, we can choose the second formulation \eqref{QOIEst}. The first approach \eqref{QOIEst} is suitable if we are looking for a fast convergence and a gain in terms of the basis size. 
Finally, we introduce the output estimator presented in \cite{grepl} as
\begin{equation}\label{EstGHONewQOI1}
       \Bar{\Bar{\Delta}}_{\rm s,1,max}=\Big(\sum_{n=1}^{ N} \frac{\Delta t }{ \alpha_{\matA_{\rm sym}}} \|\vecr^n\|_{-1}^2\sum_{n=1}^{ N} \frac{\Delta t }{ \alpha_{\matA_{\rm sym}}} \|\vecrho^n\|_{-1}^2\Big)^{1/2}+ \Delta t\; \sum_{n=0}^{N-1}\big|\big\langle {\vecr}^{n+1},\boldsymbol{\Psi}^{\mathsf{N}_{\rm du},n}\big\rangle\big|, \quad \text{for} \;\; \boldsymbol{G}^*=\matA^*,
\end{equation}
and 
\begin{equation}\label{EstGHONewQOI2}
       \Bar{\Bar{\Delta}}_{\rm s,2,max}=\Big(\sum_{n=1}^{ N} \frac{\Delta t }{ \alpha_{\matA_{\rm sym}}} \|\vecr^n\|_{-1}^2\sum_{n=1}^{ N} \frac{\Delta t }{ \alpha_{\matA_{\rm sym}}} \|\vecrho^n\|_{-1}^2\Big)^{1/2}+ \Delta t\; \sum_{n=0}^{N-1}\big|\big\langle {\vecr}^{n+1},\boldsymbol{\Psi}^{\mathsf{N}_{\rm du},n}\big\rangle\big|, \quad \text{for} \;\; \boldsymbol{G}^*=\matM+\Delta t \matA^*,
\end{equation}
and compare, in Figure \ref{fig:ErrorsNewQOIComp}, the evolution of $\widetilde{\vece}_{\rm{s, \max}}^N$ using a POD-Greedy algorithm controlled by \eqref{QOIEst2}, \eqref{EstGHONewQOI1} and \eqref{EstGHONewQOI2}.
We observe that the accuracy is the same when employing the estimators \eqref{QOIEst2} and \eqref{EstGHONewQOI2} and that the curve related to \eqref{EstGHONewQOI1} lies above the other curves for $\sfN_{\rm pr}<70$ and below them for $\sfN_{\rm pr}>70$.
\begin{figure}[H]
    \centering
   \begin{subfigure}[b]{0.42\linewidth}
    \centering
            \captionsetup{justification=raggedright,margin={1.2cm,0cm}}
        \includegraphics[width=\textwidth]{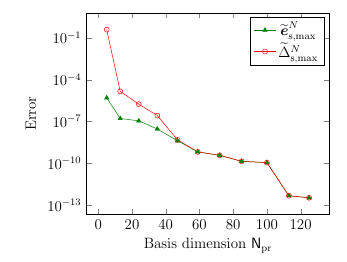}
 \caption{Offline phase} 
  \label{fig:ErrorQOINewEstOffline}
   \end{subfigure}
     \begin{subfigure}[b]{0.40\linewidth}
    \centering
            \captionsetup{justification=raggedright,margin={1.1cm,0cm}}
    \includegraphics[width=\textwidth]{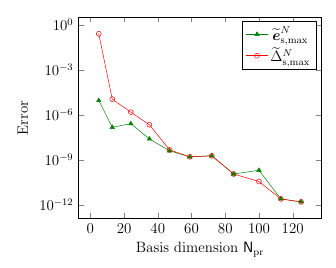}
  \caption{Online phase} 
  \label{fig:ErrorQOINewEstOnline}
   \end{subfigure}
   \caption{Maximum true and estimated errors for the second choice of the reduced output as functions of the primal basis dimension using $\Xi_{\rm training} $ on the left and $\Xi_{\rm test}$ on the right. }
    \label{fig:ErrorsQOINewEst}
\end{figure}

\begin{table}[H]
    \centering
        \renewcommand{\arraystretch}{1.2}
    \begin{tabular}{c|c|c} 
    \hline \hline
 \rule{0pt}{12pt}$\mathsf{N}_{\rm pr}$ &$\mathsf{N}_{\rm du}$  &   $\widetilde{\eta}_{\rm{s,max}}^N$\\\hline  
%5 & 10        &    312957   \\
%13 & 17      &      195224     \\
%23 & 25         &   20019.5     \\
%34 & 35          &    23.13      \\
%45 & 44       &    3.73  \\
%57 & 54          &   1.16     \\
%70 & 64      &     1.03      \\
%81 & 73      &     1.38      \\
%96 & 83         &      1.01  \\
%113 & 92        &     1.004       \\
%130 & 101          &     1.05      \\
%150 & 111        &   1.01       \\
%164 & 119           &       1.01       \\

5 & 10        &    103103  \\
13 & 19      &       222673    \\
24 & 32         &   2066.3   \\
35 & 44          &    23.6      \\
47 & 56       &    1266.5  \\
59 & 69          &   1.26    \\
72 & 84      &     1.06    \\
85 & 99      &      1.34     \\
100 & 115         &     1.18  \\
113 & 129        &      1.0       \\
125 & 143    &  1.01 \\
\hline\hline
    \end{tabular}
     \caption{Effectivities for the a posteriori error estimator \eqref{QOIEst2} with respect to the primal and dual basis dimensions $\mathsf{N_{\rm pr}}$ and $\mathsf{N_{\rm du}}$  in the offline stage. }
    \label{tab:QOISecondEst}
\end{table}
\begin{figure}[H]
    \centering
   \begin{subfigure}[b]{0.40\linewidth}
    \centering
            \captionsetup{justification=raggedright,margin={1.2cm,0cm}}
    \includegraphics[width=\textwidth]{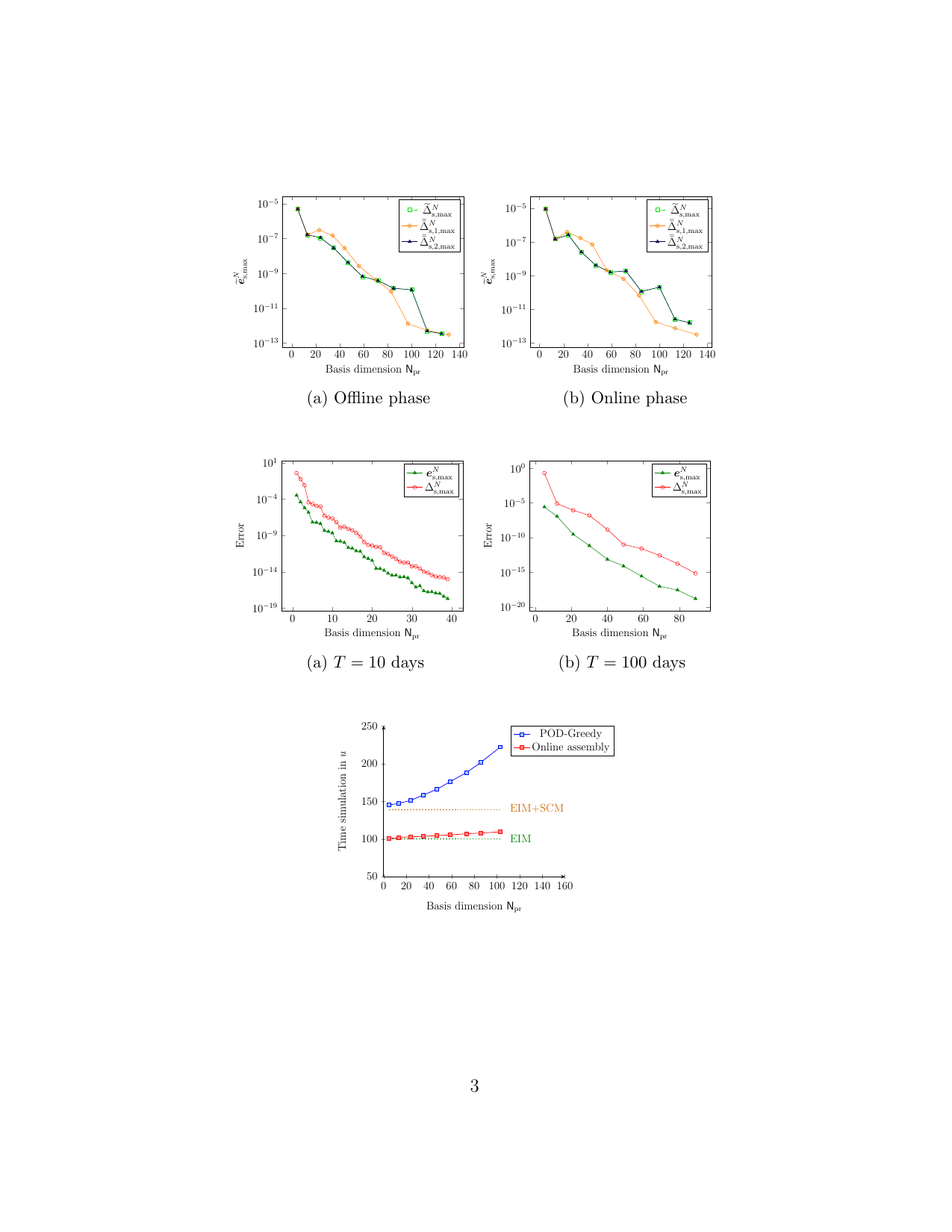}
 \caption{Offline phase} 
  \label{fig:ErrorsNewQOICompOffline}
   \end{subfigure}
     \begin{subfigure}[b]{0.40\linewidth}
    \centering
            \captionsetup{justification=raggedright,margin={0.9cm,0cm}}
    \includegraphics[width=\textwidth]{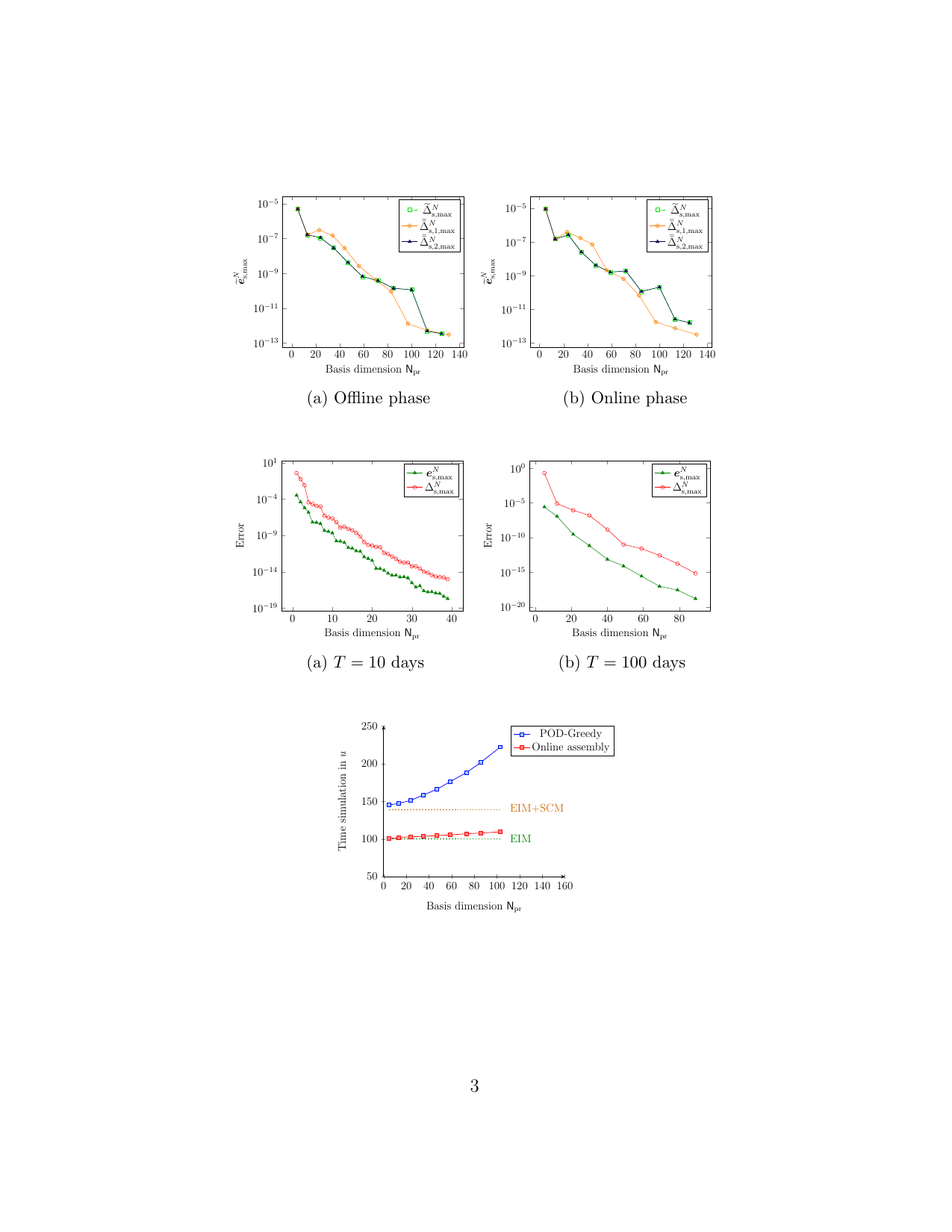}
  \caption{Online phase} 
  \label{fig:ErrorsNewQOICompOnline}
   \end{subfigure}
    \caption{Estimated errors for the second choice of the reduced output as a function of the primal basis dimension for a POD-Greedy algorithm driven by \eqref{QOIEst2}, \eqref{EstGHONewQOI1} and \eqref{EstGHONewQOI2} using $\Xi_{\rm training}$ on the left and $\Xi_{\rm test}$ on the right.}
    \label{fig:ErrorsNewQOIComp}
\end{figure}
\subsection{Computational time effort}
Here we give an indication of the time spent at each stage of the construction and evaluation of the reduced model.
 \paragraph{Offline stage.}
 We plot in Figure \ref{fig:timeCompOff} the evolution of the computation times related to the different stages  of the reduced-basis construction as functions of the primal basis dimension $\sfN_{\rm pr}$. The time is calculated as a factor of the time $u$ required to run one single high-fidelity simulation. 
 Concerning the EIM and SCM algorithms,
 these two procedures are applied once at the beginning of the offline stage before starting the POD-Greedy process. 
 This explains why their cumulated times appear as constant in the plot.
 These times amount to $100\times u$ seconds and $39\times u$ seconds respectively. 
 The cumulated time spent in the Greedy process is represented in blue up to $\sfN_{\rm pr}=103$. For each greedy iteration, the given values include the times required to assemble and compute the reduced solutions \eqref{eq:reducedsolution} and \eqref{dualReducedSol} and the residuals \eqref{resAss1}--\eqref{ResidualConst2} for all parameters of the sampling. These operations depend on the sizes $\sfN_{\rm pr}$ and $\sfN_{\rm du}$ and therefore increase as the Greedy process evolves. 
 The "Greedy" time therefore includes the time spent
 in assembling the reduced primal and dual systems
 \eqref{primalReducedEq} and \eqref{dualReducedEq} for all parameters of the sampling. This time is represented under the label
 "LF assembly" too. It is quite significant in the offline stage since all products involving the terms of the affine decompositions with the new bases matrices should be updated. This cost is of course substantially reduced in the online stage once the bases are fixed.
 The cumulated calculation time of the POD method is 
 represented in Figure \ref{fig:timeCompOff}. It is linear with respect to $\sfN_{\rm pr}$. 
 It includes the times required to run the high-fidelity simulations and the extractions of the POD modes. In both cases, for each selected parameter, these times are roughly constant.
 \begin{figure}[H]
     \centering
     \includegraphics[scale=1.3]{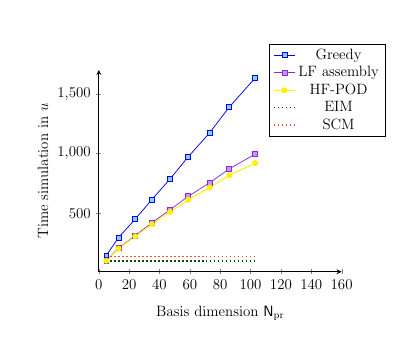}
     \caption{Offline time computation effort.}
     \label{fig:timeCompOff}
 \end{figure}
%Three main stages are considered in the time computation. First, we have the EIM, which consumes the less amount of time in the reduced basis construction process (around $1\%$ of the total duration). Next, the SCM, which represents $12\%$ of the overall time: here, the time consumption is associated with the resolution of several eigenvalue problems \eqref{eigenValuePb} during the greedy process and for the constructing of $\boldsymbol{\mathcal{B}}$ in the computation of both $\alpha_{\matA_{\rm sym},\rm LB}$ and  $\alpha_{\bG,\rm LB}$. Finally, the POD-Greedy algorithm consumes the most of the time ($87\%$ of the entire duration). Various factors are involved, such as the construction of $\boldsymbol{\rm S}_{\rm pr}$ and $\boldsymbol{\rm S}_{\rm du}$ (see Algorithm \ref{alg:OutputPAlgo}) based on the projection error and the dimension of the basis that becomes bigger as we proceed in the greedy iterations. We should also take into consideration the several operations based on dimensions $\calN$ and $\sfN$ needed for the residual dual norm assembling \eqref{resAss1}--\eqref{ResidualConst2}.

 \paragraph{Online stage.}
Given a new parameter value $\xi \in \Xi_{\rm test}$, with our implementation, the time used in the online stage to compute $\vecp^{\sfN_{\rm pr},N}$ and its corresponding reduced output at $T=200$ days and with $\sfN_{\rm pr}=92$ is divided by a factor of $10$ compared to one HF run needed to obtain $\vecp_\calM^{N}$. The high-fidelity model \eqref{SF} is solved using a stabilized bi-conjugate gradient method with an incomplete LU preconditioner, where the tolerance is set to $10^{-14}$.  The reduced model \eqref{primalReducedEq} is directly solved using an LU-Decomposition.

\section{Conclusion}\label{Conclusion}

In this work, we have discussed a reduced basis method for (finite volume approximations of) parabolic PDEs. We have introduced a new rigorous a posteriori estimator to evaluate the reduction error in a new discrete space-time energy norm independently of the parameter. We have performed numerical simulations in the context of porous media flows (single-phase flows of slightly compressible fluid parametrized by the permeability) to assess the reliability of the a posteriori error bound and its efficiency at selecting a reduced basis within a POD-Greedy algorithm.
% \huy{Bilan des résultats obtenus ? Sommes-nous contents ?}
% \JTar{We have demonstrated that our new approach provides a rigorous and efficient bound for the error in the energy norm. For the output error, we have introduced two formulations for the reduced output: one that depends on the primal and dual reduced solutions and another one that only depends on the primal reduced solution. The numerical results have showed that the second definition yields a very sharp error bound. However, we think that the first approach, regardless from the sharpness of the a posteriori error estimator, is better as it allows a fast convergence and higher precision.}

Our numerical results show that our new approach can efficiently reduce the computational cost of engineering studies with many parameter values in the context of porous media flows, especially on choosing well the reduced output in goal-oriented cases with linear QOIs. The discussed methodology can be also considered to estimate different types of linear quantity of interests such as the pressure variation along faults far from the well injection area. Indeed, understanding how injection activities affect pressure distribution in fault networks helps in mitigating risks associated with CO2 migration, fault activation, and potential leakage into overlying aquifers.

%\newpage
\appendix

\section{Proofs of various propositions}
\subsection{Proof of Proposition \ref{Prop1}}\label{Aproof1}
\begin{proof}
For each $\vecv \in \R^{\calN}$, we have 
\begin{equation}\label{Proof1}
\begin{split}
\left\langle(\matM +\Delta t \matA)\vece^{n},\vecv \right\rangle&=
% \left\langle (\matM +\Delta t \matA)(\vecp^{ \calN,n}-\vecp^{ \mathsf{N}, n}),\vecv\right\rangle
%\\&= \left\langle\matM \vecp^{\calN,n-1}+\Delta t \, \vecb-\matM \vecrpn-\Delta t \matA \vecrpn-\matM \vecp^{\mathsf{N},n-1}+\matM \vecp^{\mathsf{N},n-1},\vecv \right \rangle\\ &= \left\langle\matM(\vecp^{\calN,n-1}-\vecp^{\mathsf{N},n-1})-(\matM +\Delta t \matA)\vecrpn-\matM \vecp^{\mathsf{N},n-1} +\Delta t \, \vecb, \vecv \right\rangle 
%\\&=\left\langle \matM(\vecp^{\calN,n-1}-\vecp^{\mathsf{N},n-1}),\vecv \right\rangle -\Delta t \, \langle\vecr^{n},\vecv \rangle
%\\&= 
\left\langle\matM\vece^{n-1},\vecv \right\rangle -\Delta t \, \langle\vecr^{n},\vecv\rangle.
\end{split}    
\end{equation}
We apply $\vece^n$ to \eqref{Proof1}. We
% write $$ \left\langle (\matM +\Delta t \matA)\vece^{n},\vece^n\right \rangle = \left\langle \matM\vece^{n-1},\vece^n\right \rangle - \Delta t \left\langle \vecr^{n}, \vece^n \right\rangle.$$ We then
apply Cauchy-Schwarz inequality and use \eqref{ResidualNormPrimal}. This leads to
\begin{equation}\label{proof2}
\left\langle (\matM +\Delta t \matA)\vece^{n},\vece^n \right\rangle \leq \|\matM^{1/2}\vece^{n-1}\| \|\matM^{1/2}\vece^{n}\|+\Delta t\; \|\vecr^n\|_{-1}  \|\vece^n\|_{\boldsymbol{G}^*}.
\end{equation}
Now, recalling Young's inequality (for $c \in \mathbb{R }$, $d\in \mathbb{R}$, $\rho \in \mathbb{R}_+$): 
\begin{equation}\label{Young}
 2 |c|\, |d| \leq \frac{1}{\rho^2}c^2 + \rho^2 d^2,   
\end{equation}
and apply it twice: once for $c=\|\matM^{1/2}\vece^{n-1}\|$, $d=\|\matM^{1/2}\vece^{n}\| $ and $\rho=1$ to get 
\begin{equation}\label{proof3}
  2 \|\matM^{1/2}\vece^{n-1}\|\,\|\matM^{1/2}\vece^{n}\|\leq  \langle \matM\vece^{n-1},\vece^{n-1} \rangle +\langle \matM\vece^{n},\vece^n \rangle
\end{equation}
and another time for $c=\|\vecr^n\|_{-1}$, $d=\|\vece^n\|_{\boldsymbol{G}^*}$  and $\rho=\sqrt{\alpha_{\matA_{\rm sym}}}$ to obtain
\begin{equation}\label{proof4}
2\|\vecr^n\|_{-1} \|\vece^n\|_{\boldsymbol{G}^*} \leq \frac{1}{\alpha_{\matA_{\rm sym}}}\; \|\vecr^n\|_{-1}^2 + \alpha_{\matA_{\rm sym}} \;\|\vece^n\|_{\boldsymbol{G}^*}^2.
\end{equation}
Now the definition of the coercivity constant \eqref{alphaE} leads to 
\begin{equation}\label{proof5}
2\|\vecr^n\|_{-1}\;\|\vece^n\|_{\boldsymbol{G}^*} \leq \frac{1}{\alpha_{\matA_{\rm sym}}} \;\|\vecr^n\|_{-1}^2 + \langle  \matA_{\rm sym} \;\vece^n, \vece^n \rangle.
\end{equation}
Combining \eqref{proof2}, \eqref{proof3} and \eqref{proof5} yields 
\begin{equation}
\begin{split}
  \langle \matM \vece^{n},\vece^n \rangle +  \Delta t \; \langle \matA  \vece^n, \vece^n \rangle &\leq \frac{1}{2}\langle \matM\vece^{n-1},\vece^{n-1} \rangle + \frac{1}{2} \langle \matM\vece^{n},\vece^n \rangle\\&+\frac{\Delta t }{2} \langle  \matA_{\rm sym}\; \vece^n, \vece^n \rangle+\frac{\Delta t }{2 \alpha_{\matA_{\rm sym}}} \;\|\vecr^n\|_{-1}^2.
\end{split}
\end{equation}
Since $\langle \matA_{\rm skew} \;\vece^n,\vece^n \rangle=0 $ and $\langle \matA_{\rm sym} \;\vece^n,\vece^n \rangle=\langle \matA \vece^n,\vece^n \rangle $, we obtain 
\begin{equation}\label{proof6}
\langle \matM \vece^{n},\vece^n \rangle - \langle \matM\vece^{n-1},\vece^{n-1} \rangle +  \Delta t  \;\langle \matA_{\rm sym}\;  \vece^n, \vece^n \rangle \leq \frac{\Delta t }{ \alpha_{\matA_{\rm sym}}} \|\vecr^n\|_{-1}^2.
\end{equation}
Finally, we sum \eqref{proof6} over $\{1,\ldots,n\}$ and consider that $\vece^0=0$ to get 
\begin{equation}\label{firstErrorInq}
\langle \matM \vece^{n},\vece^n \rangle + \sum_{m=1}^{ n} \Delta t \; \langle \matA_{\rm sym}  \;\vece^m, \vece^m \rangle \leq \sum_{m=1}^{ n} \frac{\Delta t }{ \alpha_{\matA_{\rm sym}}} \|\vecr^m\|_{-1}^2.
\end{equation}
From \eqref{firstErrorInq}, we have 
\begin{equation}
\begin{split}
\sum_{n=1}^{N} \langle \matM \vece^{n},\vece^n \rangle 
% &\leq \frac{\Delta t }{ \alpha_{\matA_{\rm sym}}}  \sum_{n=1}^{ N} \sum_{m=1}^{n}\|\vecr^m\|_{-1}^2\\ & 
\leq \frac{\Delta t }{ \alpha_{\matA_{\rm sym}}} \sum_{n=1}^{N}(N+1-n)\|\vecr^n\|_{-1}^2 
% \\& 
\leq 
% \frac{ N \Delta t }{ \alpha_{\matA_{\rm sym}}}\sum_{n=1}^{N} \|\vecr^n\|_{-1}^2 \\& = 
\frac{ T }{ \alpha_{\matA_{\rm sym}}}\sum_{n=1}^{N} \|\vecr^n\|_{-1}^2.
\end{split}
\end{equation}
%\SBoy{Until here, the reasoning is similar to \cite[Prop.~4.3]{Haasdonk2008} when the inner product is $\boldsymbol{G}=\boldsymbol{M^{-1}}$, $C_E=1$ so $C=\frac12$, and $\gamma=1$. It yields the estimation (4.10) using $|\cdot|_I$ which depends on the parameter !}
As a consequence, 
\begin{equation}\label{proof7}
 \sum_{n=1}^{ N} \langle \matM \vece^{n},\vece^n \rangle  \leq \frac{T}{\alpha_{\matA_{\rm sym}}} \sum_{n=1}^{N} \|\vecr^n\|_{-1}^2.
\end{equation}
On the other hand, using \eqref{firstErrorInq}, we can write 
\begin{equation}\label{proof8}
   \sum_{m=1}^{ n} \langle \matA_{\rm sym}  \vece^m, \vece^m \rangle \leq \frac{ 1}{ \alpha_{\matA_{\rm sym}}}\sum_{m=1}^{ n}  \|\vecr^m\|_{-1}^2, \quad \forall n\in \{1,\ldots,N\}.
\end{equation}
%In particular, for $n=N$
%\begin{equation}\label{proof9}
%   \sum_{m=1}^{ N} \langle \matA_{\rm sym} \; \vece^m, \vece^m \rangle \leq \frac{ 1}{ \alpha_{\matA_{\rm sym}}}\sum_{m=1}^{ N}  \|\vecr^m\|_{-1}^2.
%\end{equation}
Now \eqref{proof7} with \eqref{proof8} for $n=N$ % \eqref{proof9} 
enable the following inequality
\begin{equation}
 \sum_{m=1}^{ N}\big[\langle \matM \vece^{m},\vece^m \rangle  +\Delta t\;\langle \matA_{\rm sym}  \;\vece^m, \vece^m \rangle \big] \leq \frac{T+\Delta t}{\alpha_{\matA_{\rm sym}}} \sum_{m=1}^{ N}  \|\vecr^m\|_{-1}^2
\end{equation}
where it is possible to use % $\alpha_{\bG}$ in 
\eqref{beta} and write 
\begin{equation}
\begin{split}
\alpha_{\bG,\rm LB}\; \sum_{m=1}^{N}\big[\langle \matM \vece^{m},\vece^m \rangle  +\Delta t\;\langle \matA_{\rm sym}^* \; \vece^m, \vece^m \rangle \big]
& \leq \sum_{m=1}^{ N}[\langle \matM \vece^{m},\vece^m \rangle  +\Delta t\;\langle \matA_{\rm sym}  \;\vece^m, \vece^m \rangle ]\\ &\leq \frac{T+\Delta t}{\alpha_{\matA_{\rm sym}}} \sum_{m=1}^{ N}  \|\vecr^m\|_{-1}^2\\
&\leq \frac{T+\Delta t}{\alpha_{\matA_{\rm sym,\rm LB}}} \sum_{m=1}^{ N}  \|\vecr^m\|_{-1}^2
\end{split}
\end{equation}
i.e. an upper bound of the error that is independent of the parameter $\xi$ as opposed to \cite[Prop.~4.3]{Haasdonk2008}.
%Therefore $$ \sum_{m=1}^{N} \langle \vece^m,\matM \vece^m\rangle+\Delta t\;\sum_{m=1}^{N} \langle \vece^{m},\matA_{\rm sym}^* \;\vece^{m} \rangle \leq \frac{T+\Delta t}{\alpha_{\matA_{\rm sym}} \;\alpha_{\bG}}\sum_{m=1}^{N}\|{\vecr}^{m}\|_{-1}^2. $$
\end{proof}

\subsection{Proof of Proposition \ref{prop2}}\label{Aproof2}
\begin{proof}
We start by writing
$$\big \langle (\matM +\Delta t \matA^T)\vecepsi^m,\vecv \big\rangle= \left \langle \matM \vecepsi^{m+1},\vecv\right\rangle -\Delta t \, \big \langle \vecrho^m,\vecv \big\rangle, \quad \forall \vecv \in \R^{\calN},  $$
 and then take $\vecv=\vecepsi^m$ to obtain
$$ \big\langle (\matM +\Delta t \matA^T)\vecepsi^m, \vecepsi^m \big \rangle= \langle \matM \vecepsi^{m+1},\vecepsi^m \rangle -\Delta t \, \big\langle \vecrho^m,\vecepsi^m\big\rangle.$$
We apply Cauchy-Schwarz inequality and use \eqref{ResidualNormDual}
\begin{equation}\label{DMproof1}
\big\langle (\matM +\Delta t \matA^T)\vecepsi^m, \vecepsi^m\big \rangle \leq  \|\matM^{1/2}\vecepsi^{m+1}\|\|\matM^{1/2}\vecepsi^{m}\|+\Delta t\,\|\vecrho^m\|_{-1}\;\|\vecepsi^m\|_{\bG^{*}}.    
\end{equation}
Similarly to the primal problem, we apply inequality \eqref{Young} twice and get 
\begin{align}
2 \|\matM^{1/2}\vecepsi^{m+1}\|\|\matM^{1/2}\vecepsi^{m}\| &\leq \left \langle \matM\vecepsi^{m+1},\vecepsi^{m+1} \right\rangle +\left\langle \matM\vecepsi^{m},\vecepsi^m \right\rangle, \label{DMproof2}\\
2 \|\vecrho^m\|_{-1}\|\vecepsi^m\|_{\bG^{*}} & \leq \frac{1}{\alpha_{\matA_{\rm sym}}} \|\vecrho^m\|_{-1}^2 + \alpha_{\matA_{\rm sym}}\|\vecepsi^m\|_{\bG^{*}}^2. \label{DMproof3}
\end{align}
Based on the definition of $\alpha_{\matA_{\rm sym}}$, \eqref{DMproof3} becomes 
\begin{equation}\label{DMproof4}
\begin{split}
2\|\vecrho^m\|_{-1}\|\vecepsi^m\|_{\bG^{*}} &\leq \frac{1}{\alpha_{\matA_{\rm sym}}} \|\vecrho^m\|_{-1}^2 + \big\langle  \matA_{\rm sym} \;\vecepsi^m, \vecepsi^m \big\rangle
\\ & =  \frac{1}{\alpha_{\matA_{\rm sym}}} \|\vecrho^m\|_{-1}^2 + \big\langle  \matA \vecepsi^m, \vecepsi^m \big\rangle, \quad \text{since} \; \big\langle  \matA_{\rm skew}\; \vecepsi^m, \vecepsi^m \big\rangle=0
\\ &= \frac{1}{\alpha_{\matA_{\rm sym}}} \|\vecrho^m\|_{-1}^2 + \big\langle  \vecepsi^m, \matA^T \vecepsi^m \big\rangle
\\ &=  \frac{1}{\alpha_{\matA_{\rm sym}}} \|\vecrho^m\|_{-1}^2 + \big\langle   \matA^T\vecepsi^m, \vecepsi^m \big \rangle.
\end{split}
\end{equation}
Now, inequalities \eqref{DMproof1}--\eqref{DMproof4} lead to
\begin{equation}\label{DMproof5}
\big\langle \matM \vecepsi^{m},\vecepsi^m \big \rangle - \big\langle \matM\vecepsi^{m+1},\vecepsi^
{m+1} \big\rangle +  \Delta t \, \big\langle \matA^T  \vecepsi^m, \vecepsi^m \big\rangle \leq \frac{\Delta t }{ \alpha_{\matA_{\rm sym}}} \|\vecrho^m\|_{-1}^2.
\end{equation}
We finally sum \eqref{DMproof5} over $\{n,\ldots,N-1\}$ and suppose that $\vecepsi^N=0$. We get
\begin{equation}\label{firstInqDualM}
\big\langle {\vecepsi^n},\matM {\vecepsi}^n\big\rangle + \Delta t \sum_{m=n}^{ N-1}\big \langle {\vecepsi}^{m}, \matA^T {\vecepsi}^{m} \big\rangle \leq \frac{\Delta t }{ \alpha_{\matA_{\rm sym}}} \sum_{m=n}^{N-1} \|\vecrho^m\|_{-1}^2.
\end{equation}
Inequality \eqref{firstInqDualM} holds true for all $n\in \{0,\ldots,N-1\}$. So we can write 
\begin{equation}
\begin{split}
\sum_{n=0}^{N-1}\left\langle {\vecepsi^n},\matM {\vecepsi}^n\right\rangle &\leq \frac{\Delta t }{ \alpha_{\matA_{\rm sym}}} \sum_{n=0}^{N-1}\sum_{m=n}^{N-1} \|\vecrho^m\|_{-1}^2 \\& \leq \frac{\Delta t }{ \alpha_{\matA_{\rm sym}}} \sum_{n=0}^{N-1}(n+1) \|\vecrho^n\|_{-1}^2
\\& \leq \frac{N \Delta t }{ \alpha_{\matA_{\rm sym}}} \sum_{n=0}^{N-1} \|\vecrho^n\|_{-1}^2
\\& =  \frac{T }{ \alpha_{\matA_{\rm sym}}} \sum_{n=0}^{N-1} \|\vecrho^n\|_{-1}^2.
\end{split}
\end{equation}
Then, 
\begin{equation}\label{DMproof6}
\sum_{n=0}^{N-1}\big\langle {\vecepsi^n},\matM {\vecepsi}^n\big\rangle \leq   \frac{  T}{ \alpha_{\matA_{\rm sym}}} \sum_{n=0}^{N-1} \|\vecrho^n\|_{-1}^2.
\end{equation}
On the other side, again using \eqref{firstInqDualM}, we have in particular for $n=0$
\begin{equation}\label{DMproof7}
\Delta t \;\sum_{m=0}^{N-1}\big\langle {\vecepsi}^{m}, \matA^T {\vecepsi}^{m} \big\rangle \leq \frac{\Delta t}{ \alpha_{\matA_{\rm sym}}} \sum_{m=0}^{N-1} \|\vecrho^m\|_{-1}^2.
\end{equation}
Combining \eqref{DMproof6} and \eqref{DMproof7}, leads to 
\begin{equation}\label{InqDual}
\sum_{m=0}^{N-1}\big[\left\langle {\vecepsi^m},\matM {\vecepsi}^m\right\rangle + \Delta t\, \big\langle {\vecepsi}^{m}, \matA^T {\vecepsi}^{m} \big\rangle \big] \leq \frac{T+\Delta t }{ \alpha_{\matA_{\rm sym}}} \sum_{m=0}^{N-1} \|\vecrho^m\|_{-1}^2.
\end{equation}
Now, since 
$$\big \langle {\vecepsi}^{m}, \matA^T {\vecepsi}^{m} \big\rangle = \big\langle \matA {\vecepsi}^{m}, {\vecepsi}^{m}
\big\rangle=\big\langle \matA_{\rm sym}\; {\vecepsi}^{m}, {\vecepsi}^{m}\big \rangle,$$
we get 
\begin{equation}
\sum_{m=0}^{N-1}\big[\big\langle {\vecepsi^m},\matM {\vecepsi}^m\big\rangle + \Delta t \,\big\langle {\vecepsi}^{m}, \matA_{\rm sym} \;{\vecepsi}^{m} \big\rangle \big] \leq \frac{T+\Delta t }{ \alpha_{\matA_{\rm sym}}} \sum_{m=0}^{N-1} \|\vecrho^m\|_{-1}^2.
\end{equation}
Finally, we recall the definition of $\alpha_{\bG}$, which results in
\begin{equation}
\sum_{m=0}^{N-1}\big[\big\langle {\vecepsi^m},\matM {\vecepsi}^m\big\rangle + \Delta t \, \big\langle {\vecepsi}^{m}, \matA_{\rm sym}^*\; {\vecepsi}^{m} \big\rangle \big] \leq \frac{T+\Delta t }{  \alpha_{\bG,\rm LB}\alpha_{\matA_{\rm sym,\rm LB}}} \sum_{m=0}^{N-1} \|\vecrho^m\|_{-1}^2.
\end{equation}
\end{proof}

\subsection{Proof of Proposition \ref{prop3}}\label{Aproof3}
\begin{proof}
From \eqref{dualPb}, we have 
$$\big\langle \matM (\boldsymbol{\boldsymbol{\psi}}_{\calM,n}^k-\boldsymbol{\psi}_{\calM,n}^{k+1})+\Delta t \;\matA^T \boldsymbol{\psi}_{\calM,n}^k,\vece^{k+1}\big \rangle =0.   $$
We then sum over $k=0,\ldots,n-1$, to obtain
\begin{equation*}
    %\begin{split}
    \langle \matM (\boldsymbol{\psi}_{\calM,n}^0-\boldsymbol{\psi}_{\calM,n}^1),\vece^1\rangle +\langle \matM (\boldsymbol{\psi}_{\calM,n}^1-\boldsymbol{\psi}_{\calM,n}^2),\vece^2\rangle +\ldots +\langle \matM (\boldsymbol{\psi}_{\calM,n}^{n-1}-\boldsymbol{\psi}_{\calM,n}^n),\vece^{n}\rangle
    +\Delta t\sum_{k=0}^{n-1} \langle \matA^T \boldsymbol{\psi}_{\calM,n}^k,\vece^{k+1} \rangle=0,        
    %\end{split}
\end{equation*}
which gives us  
\begin{equation*}
    \begin{split}
      \langle \matM \boldsymbol{\psi}_{\calM,n}^0,\vece^1\rangle-&\langle \matM \boldsymbol{\psi}_{\calM,n}^1,\vece^1\rangle +\langle \matM \boldsymbol{\psi}_{\calM,n}^1,\vece^2\rangle-\langle \matM \boldsymbol{\psi}_{\calM,n}^2,\vece^2\rangle +\ldots \\&+\langle \matM \boldsymbol{\psi}_{\calM,n}^{n-1},\vece^{n}\rangle-\langle \matM \boldsymbol{\psi}_{\calM,n}^{n},\vece_n^{n}\rangle+ \Delta t\sum_{k=0}^{n-1} \langle \matA^T \boldsymbol{\psi}_{\calM,n}^k,\vece^{k+1} \rangle=0,  
    \end{split}
\end{equation*}
leading to 
$$ \sum_{k=0}^{n-1} \langle \matM \boldsymbol{\psi}_{\calM,n}^k,\vece^{k+1} \rangle -\sum_{k=1}^{n-1} \langle \matM \boldsymbol{\psi}_{\calM,n}^k,\vece^k \rangle+ \Delta t\sum_{k=0}^{n-1} \langle \matA^T \boldsymbol{\psi}_{\calM,n}^k,\vece^{k+1} \rangle=\langle \matM \boldsymbol{\psi}_{\calM,n}^{n},\vece^{n} \rangle.$$
Since $\langle \matM \boldsymbol{\psi}_{\calM,n}^0,\vece^0 \rangle =0$, the above equation becomes
$$ \sum_{k=0}^{n-1} \langle \matM \boldsymbol{\psi}_{\calM,n}^k,\vece^{k+1} -\vece^k\rangle+ \Delta t\sum_{k=0}^{n-1} \langle \matA^T \boldsymbol{\psi}_{\calM,n}^k,\vece^{k+1} \rangle=\langle \matM \boldsymbol{\psi}_{\calM,n}^{n},\vece^{n}\rangle. $$
Using the final condition of the dual problem \eqref{dualPb}, we can write
\begin{equation}\label{proof}
\langle \matM \boldsymbol{\psi}_{\calM,n}^n,\vece^{n} \rangle=-\langle \boldsymbol{l}, \vecp_\calM^{n}- \vecp^{\mathsf{N}_{\rm pr},n} \rangle  = \sum_{k=0}^{n-1} [ \langle \matM \boldsymbol{\psi}_{\calM,n}^{k},\vece^{k+1}-\vece^{k} \rangle+\Delta t\;\langle \matA^T \boldsymbol{\psi}_{\calM,n}^k,\vece^{k+1} \rangle].
\end{equation}
Equation \eqref{proof} can be rewritten as  
\begin{equation}
    \begin{split}
     \langle \boldsymbol{l}, \vecp_\calM^{n}- \vecp^{\mathsf{N}_{\rm pr},n} \rangle  &= -\sum_{k=0}^{n-1}  \langle \left (\matM +\Delta t \matA \right)\vecp_\calM^{k+1},\boldsymbol{\psi}_{\calM,n}^{k}\rangle+\sum_{k=0}^{n-1}  \langle \left (\matM +\Delta t \matA \right)\vecp^{\sfN_{\rm pr},k+1},\boldsymbol{\psi}_{\calM,n}^{k}\rangle
     \\ & \quad + \sum_{k=0}^{n-1} \langle \matM \vecp_\calM^{k},\boldsymbol{\psi}_{\calM,n}^k\rangle -\sum_{k=0}^{n-1} \langle \matM \vecp^{\sfN_{\rm pr},k},\boldsymbol{\psi}_{\calM,n}^k\rangle
     \\ &=- \Delta t \; \sum_{k=0}^{n-1} \langle \vecb, \boldsymbol{\psi}_{\calM,n}^k\rangle - \sum_{k=0}^{n-1} \langle \matM \vecp^{\sfN_{\rm pr},k},\boldsymbol{\psi}_{\calM,n}^k\rangle+\sum_{k=0}^{n-1} \langle \left(\matM+\Delta t  \matA\right)\vecp^{\sfN_{\rm pr},k+1},\boldsymbol{\psi}_{\calM,n}^k\rangle
     \\ & \quad + \, \sum_{k=0}^{n-1}\langle \left(\matM+\Delta t  \matA \right) \vecp^{\sfN_{\rm pr},k+1},\boldsymbol{\psi}_{n}^{\sfN_{\rm du},k}\rangle-\sum_{k=0}^{n-1}\langle \left(\matM+\Delta t  \matA \right) \vecp^{\sfN_{\rm pr},k+1},\boldsymbol{\psi}_{n}^{\sfN_{\rm du},k}\rangle
     \\& \quad + \,  \sum_{k=0}^{n-1}\langle \matM \vecp^{\sfN_{\rm pr},k},\boldsymbol{\psi}_{n}^{\sfN_{\rm du},k}\rangle -\sum_{k=0}^{n-1}\langle \matM \vecp^{\sfN_{\rm pr},k},\boldsymbol{\psi}_{n}^{\sfN_{\rm du},k}\rangle+\Delta t \; \sum_{k=0}^{n-1}\langle \vecb,\boldsymbol{\psi}_{n}^{\sfN_{\rm du},k}\rangle - \Delta t \; \sum_{k=0}^{n-1}\langle \vecb,\boldsymbol{\psi}_{n}^{\sfN_{\rm du},k}\rangle
     \\ &= \Delta t \;\sum_{k=0}^{n-1} \langle \vecr^{k+1},\boldsymbol{\psi}_{\calM,n}^k-\boldsymbol{\psi}_n^{\sfN_{\rm du},k}\rangle +  \Delta t \;\sum_{k=0}^{n-1} \langle \vecr^{k+1},\boldsymbol{\psi}_n^{\sfN_{\rm du},k}\rangle
     \\ &=  \Delta t \;\sum_{k=0}^{n-1} \langle \vecr^{k+1},\boldsymbol{\Psi}_\calM^{N-n+k}-\boldsymbol{\Psi}^{\sfN_{\rm du},N-n+k}\rangle +  \Delta t \;\sum_{k=0}^{n-1} \langle \vecr^{k+1},\boldsymbol{\Psi}^{\sfN_{\rm du},N-n+k}\rangle
     \\ &=  \Delta t \;\sum_{k=0}^{n-1} \langle \vecr^{k+1},\vecepsi^{N-n+k}\rangle +  \Delta t \;\sum_{k=0}^{n-1} \langle \vecr^{k+1},\boldsymbol{\Psi}^{\sfN_{\rm du},N-n+k}\rangle.
    \end{split}
\end{equation}
\begin{itemize}
    \item \textbf{First choice.} The error bound is evaluated according to 
    \begin{equation}
        | {\scas}^{n}-  {\scas}^{\mathsf{N}_{\rm s},n}|= \sum_{k=0}^{n-1}  \Delta t \, |\langle \vecr^{k+1},\vecepsi^{N-n+k}\rangle|.
    \end{equation}
    We use \eqref{ResidualNormPrimal} and Cauchy Schwarz inequality to obtain 
    \begin{equation}\label{OutputEval}
  \begin{split}
     |{\scas}^{n}-  {\scas}^{\mathsf{N}_{\rm s},n}| &\leq \Big(\sum_{k=0}^{n-1} \Delta t\, \|\vecr^{k+1}\|_{-1}^2\Big)^{1/2}\Big(\sum_{k=0}^{n-1} \Delta t\, \|\vecepsi^{N-n+k}\|_{\bG^{*}}^2\Big)^{1/2}.
  \end{split} 
\end{equation}
Inequality \eqref{OutputEval} is valid for $n=N$. Hence, we can write
\begin{equation}
    |{\scas}^{N}-  {\scas}^{\mathsf{N}_{\rm s},N}| \leq \Big(\sum_{k=0}^{N-1} \Delta t\,\|\vecr^{k+1}\|_{-1}^2\Big)^{\!1/2}\Big(\sum_{k=0}^{N-1} \Delta t\,\|\vecepsi^{k}\|_{\bG^{*}}^2\Big)^{\!1/2}.
\end{equation}
We have from the definition of $\alpha_{\bG}$ that
\begin{equation}
\alpha_{\bG} \;\|\vecepsi^{k}\|_{\bG^{*}}^2 \leq \left\langle (\matM+\Delta t \matA_{\rm sym}) \vecepsi^{k},\vecepsi^{k}\right\rangle.
\end{equation}
We then sum over $\{0,\ldots,N-1\}$ and use \eqref{InqDual} to obtain 
\begin{equation}
    %\begin{split}
        \alpha_{\bG} \sum_{k=0}^{N-1} \|\vecepsi^{k}\|^2_{\bG^{*}} \leq \sum_{k=0}^{N-1} \left\langle (\matM+\Delta t \matA_{\rm sym})
        \vecepsi^{k},\vecepsi^{k}\right\rangle %\\ & 
        \leq \frac{T+\Delta t}{\alpha_{\matA_{\rm sym}}}\sum_{k=0}^{N-1} \|\vecrho^{k}\|_{-1}^2.
    %\end{split}
\end{equation}
Therefore,
\begin{equation}
 |{\scas}^{N}-  {\scas}^{\mathsf{N}_{\rm s},N}| \leq  \Delta t \Big(\sum_{n=1}^N \|\vecr^{n}\|_{-1}^2\Big)^{\!1/2} \Delta_{\rm du}^N=: \Delta_s^N.
\end{equation}
\item \textbf{Second choice.} The error bound at $n=N$ is evaluated according to
\begin{equation}
\begin{split}
      |{\scas}^{N}-  \widetilde{\scas}^{\mathsf{N}_{\rm s},N}| &\leq\Delta t \sum_{n=0}^{N-1}| \langle \vecr^{n+1},\vecepsi^{N-N+n}\rangle| +  \Delta t \sum_{n=0}^{N-1}| \langle \vecr^{n+1},\boldsymbol{\Psi}^{\sfN_{\rm du},N-N+n}\rangle|
      \\& \leq \Delta t \Big(\sum_{n=1}^N \|\vecr^{n}\|_{-1}^2\Big)^{\!1/2} \Delta_{\rm du}^N+\Delta t \sum_{n=0}^{N-1}| \langle \vecr^{n+1},\boldsymbol{\Psi}^{\sfN_{\rm du},n}\rangle| =: \widetilde{\Delta}_s^N.
\end{split}
\end{equation}
\end{itemize}
\end{proof}

\section{Successive constraint method}\label{SCM}
Let $\Xi$ be a set of parameter values. For each $\xi\in \Xi$, the successive constraint method (SCM) consists in finding an upper bound $\alpha_{\rm{UB}}(\xi)$ and a lower bound $\alpha_{\rm{LB}}(\xi)$ of the coercivity constant $\alpha(\xi)$ through an offline-online strategy.
The SCM relies on the affine decomposition assumption \eqref{affineDecomp}, which enables us to express $\alpha(\xi)$ as 
\begin{equation}\label{trueAlpha}
\alpha(\xi)= \inf_{\vecv \in \mathbb{R}^{\mathcal{N}}} \sum_{d=1}^{D_a}\Theta_d^a(\xi)\frac{\vecv^T\matA_d\vecv}{\|\vecv\|_{\boldsymbol{G}^*}^2}=\inf_{\vecv \in \mathbb{R}^{\mathcal{N}}} \sum_{d=1}^{D_a}\Theta_d^a(\xi)w_d.
\end{equation}
To define the lower bound $\alpha_{\rm{LB}}(\xi)$, we express \eqref{trueAlpha} as a  minimization problem  
\begin{equation}\label{miniProb}
    \alpha(\xi)=\inf_{\vecw\in \boldsymbol{\mathcal{W}}} \mathcal{J}(\xi,w),
\end{equation}
where the set $\boldsymbol{\mathcal{W}}$ is defined as
$$\boldsymbol{\mathcal{W}}:= \Big\{\vecw=\left(w_1,\ldots,w_{D_a}\right) \in \mathbb{R}^{D_a} \mid \exists \; \vecv\in \mathbb{R}^{\mathcal{N}} \; \text{s.t.}\;\; w_d=\frac{\vecv^T\matA_d\vecv}{\|\vecv\|_{\boldsymbol{G}^*}^2},\; 1\leq d \leq D_a \Big\}, $$
and the objective function is given by
\begin{align*}
  \boldsymbol{\mathcal{J}} \colon \Xi \times \mathbb{R}^{D_a} & \to  \mathbb{R}\\
 (\xi,\vecw) &\mapsto  \mathcal{J}(\xi,\vecw)= \sum_{d=1}^{D_a}\Theta_d^a(\xi) w_d.
\end{align*}
The idea of the SCM is based on creating two sets  $\boldsymbol{\mathcal{W}}_{\rm LB}$ and  $\boldsymbol{\mathcal{W}}_{\rm UB}$, such that $\boldsymbol{\mathcal{W}}_{\rm UB} \subset \boldsymbol{\mathcal{W}} \subset \boldsymbol{\mathcal{W}}_{\rm LB}$, where we perform the minimization over these two sets and define
$$ \alpha_{\rm LB}(\xi)=\min_{\vecw\in \boldsymbol{\mathcal{W}}_{\rm LB}} \mathcal{J}(\xi,\vecw) \qquad \text{and} \qquad   \alpha_{\rm UB}(\xi)=\min_{\vecw\in \boldsymbol{\mathcal{W}}_{\rm UB}} \mathcal{J}(\xi,\vecw).$$
\paragraph*{Definition of $\boldsymbol{\mathcal{W}}_{\rm UB}$.}
We introduce the subset of parameter values $\Xi_{\rm M} \subset \Xi $ obtained using a greedy algorithm (see Algorithm \ref{alg:SCMAlgo}). The construction of $\Xi_{\rm M}$ requires a training set $\Xi_{\rm training}$ and a fixed tolerance $0\leq \rm{tol} \leq1$ that controls the relative gap between the lower and upper bounds.
 \begin{algorithm}[ht]
 \textbf{Input}: $\rm{tol},\Xi.$
 \begin{algorithmic}[1]
 \caption{Construction of $\Xi_{\rm M}$}
 \State Choose arbitrary $\xi_1\in \Xi$.
 \State Set $j=1$ and $\Xi_j=\{\xi_1\}$.
 \State Compute $\eta_{j}(\xi)=\frac{\alpha_{\rm{UB}}(\xi)-\alpha_{\rm{LB}}(\xi)}{\alpha_{\rm{UB}}(\xi)}$.
 \While{$\underset{\xi \in \Xi}{\max}\;\eta_{j}(\xi) > \rm{tol}$}
 \State Compute  $\xi_{j+1}=\arg {\underset{\xi \in \Xi}{\max}}\;\eta_{j}(\xi)$.
 \State Set $\Xi_{j+1}=\Xi_{j}\cup \{\xi_{j+1}\}$.
 \State ${j} \gets {j+1}.$
 \State  $\eta_{j}(\xi)=\frac{\alpha_{\rm{UB}}(\xi)-\alpha_{\rm{LB}}(\xi)}{\alpha_{\rm{UB}}(\xi)}$.
 \EndWhile
 \label{alg:SCMAlgo}
 \end{algorithmic}
 \end{algorithm}

For all $1\leq j \leq \rm M$ and for each $\xi_j \in \Xi_{\rm M}$,
\begin{enumerate}
    \item we assemble $\matA(\xi_j)=\sum_{d=1}^{D_a} \Theta_d^a(\xi_j) \matA_d$,
    \item we solve the generalized eigenvalue problem
    \begin{equation}\label{eigenValuePb}
        \matA (\xi_j)\boldsymbol{y}=\lambda \boldsymbol{G}^* (\xi^*)\boldsymbol{y},
    \end{equation}
    and extract the smallest eigenvalue $\alpha^j$ and its corresponding eigenvector $\boldsymbol{v}^j$,
    \item we compute the vector $\boldsymbol{w}^j \in \mathbb{R}^{D_a}$ such that
    $$(\boldsymbol{w}^j)_d=\frac{(\boldsymbol{v}^j)^T \matA_d \boldsymbol{v}^j}{\|\boldsymbol{v}^j\|_{\boldsymbol{G}^*}^2}$$,
    \item  we define the set
    $$ \boldsymbol{\mathcal{W}}_{\rm UB}=\left\{ \boldsymbol{w}^j \mid 1\leq j \leq \rm M \right\},$$
    and compute the upper bound
    $$\alpha_{\rm UB}(\xi)= \underset{\vecw \in \boldsymbol{\mathcal{W}}_{\rm UB}}{\mathrm{arg\;min}}\mathcal{J}(\xi,\vecw). $$
\end{enumerate}

\paragraph*{Definition of $\boldsymbol{\mathcal{W}}_{\rm LB}$.} First, we need to introduce the constraint interval 
$$ \boldsymbol{\mathcal{B}}= \prod_{d=1}^{D_a}\left[\inf_{\vecv \in \mathbb{R}^{\mathcal{N}}} \frac{\vecv^T\matA_d\vecv}{\|\vecv\|_{\boldsymbol{G}^*}^2}, \sup_{\vecv \in \mathbb{R}^{\mathcal{N}}}\frac{\vecv^T\matA_d\vecv}{\|\vecv\|_{\boldsymbol{G}^*}^2}\right], $$
obtained by computing, once at the beginning of the SCM, the smallest and largest eigenvalues of a problem similar to \eqref{eigenValuePb} and obtained by replacing $\matA (\xi_j)$
by $\matA_d$. 
\begin{comment}
To define $\boldsymbol{\mathcal{W}}_{\rm LB}$, we must impose additional constraints to the minimization problem \eqref{miniProb}. 
    \begin{itemize}
    \item A constraint based on the exact eigenvalues $\alpha(\xi^{'})$ for $\xi^{'} \in P_{\rm M1}(\xi;\Xi_j):=\left\{ \rm M_1 \text{ closest points to } \xi \text{ in } \Xi_j \right\}$. 
    \item A constraint based on the previous lower bound $\alpha_{\rm LB}^{j-1}(\xi^{'})$ for $\xi^{'} \in P_{\rm M_2}(\xi;\Xi \backslash \Xi_j)$. 
\end{itemize}
\end{comment}
We define the set 
$$ \begin{aligned}
    \boldsymbol{\mathcal{W}}_{\rm LB}^j(\xi)=\left\{ \vecw \in \boldsymbol{\mathcal{B}} \mid \boldsymbol{\mathcal{J}}(\xi^{'};\vecw) \geq \alpha(\xi^{'}), \quad \forall \xi^{'}\in P_{\rm M_1}(\xi;\Xi_j); \right. \\ \left.  \boldsymbol{\mathcal{J}}(\xi^{'};\vecw) \geq \alpha_{\rm LB}^{j-1}(\xi^{'}),\quad \forall \xi^{'} \in P_{\rm M_2}(\xi;\Xi \backslash \Xi_j)\right \},
\end{aligned}
$$
where $ P_{\rm M}(\xi;\mathbb{D}):=\left\{ \rm M \text{ closest points to } \xi \text{ in } \mathbb{D} \right\}$.

\section{Empirical interpolation method}\label{EIM}

The efficiency of the RB method relies on the affine decomposition \eqref{affineDecomp} proposed in Section \ref{CompAspect}. However, this decomposition is not always available. But the empirical interpolation method (EIM) can provide one to approximate, in our case,  $\Hat{\boldsymbol{v}}(\xi)$ with an affine sum. Given a family of parameter-dependent vectors $\mathscr{T}=\{ \Hat{\boldsymbol{v}}(\xi)\in \R^F; \xi\in \Xi_{\rm training} \}$, the EIM aims at finding an approximation to the elements of $\mathscr{T}$ through an operator $\mathcal{I}_{\rm M_{ EIM}}$ that interpolates the vector $ \Hat{\boldsymbol{v}}(\xi)$ at some selected points. 
Using a greedy process, we construct the set of vectors $\{\tilde{\boldsymbol{v}}^1,\ldots, \tilde{\boldsymbol{v}}^{\rm M_{EIM}}\}$ and the interpolation points $\{x_1,\ldots,x_{\rm M_{EIM}}\}$ such that 
\begin{subequations}
\begin{align}
   \mathcal{I}_{\rm M_{ EIM}}[ \Hat{\boldsymbol{v}}(\xi)]\approx\sum_{i=1}^{\rm M_{EIM}} \Theta_i(\xi)  \tilde{\boldsymbol{v}}^i,\label{subeq2}
\end{align}
\end{subequations}
where $\Theta_d(\xi)\in \R$ and $\tilde{\boldsymbol{v}}^i\in \R^F$, $1\leq i \leq \rm M_{EIM}$, do not depend on $\xi$.

To begin the procedure, we randomly choose $\xi_1$ from $\Xi_{\rm training}$ and set $ \Hat{\boldsymbol{v}}^1= \Hat{\boldsymbol{v}}(\xi_1)$. The first interpolation point is chosen such that 
$$ x_1=\arg \max_{1\leq j \leq F } | \Hat{\boldsymbol{v}}_{j}^1|, $$
where $\Hat{\boldsymbol{v}}_{j}^1$ is the $j$-th element of $\Hat{\boldsymbol{v}}^1$.
We then initialize the first basis function as 
$$ \tilde{\boldsymbol{v}}^1=\Hat{\boldsymbol{v}}^1/\Hat{\boldsymbol{v}}_{j_1}^1, $$
with $1\leq j_1\leq F$, the index corresponding to the selected point $x_1$. At the $m$-th step, $m=2,\ldots,\rm{ M_{ EIM}}-1$, given the set of interpolations points $\{x_1,\ldots,x_{\rm M_{EIM}-1}\}$ and the set of basis elements $\{ \tilde{\boldsymbol{v}}^1,\ldots, \tilde{\boldsymbol{v}}^{\rm M_{EIM}-1}\}$, we select the next snapshot as the worst approximated one by the current interpolant. To do so, we first
write the $m$ equations stating the equality between the current EIM approximation and a vector $\Hat{\boldsymbol{v}}(\xi)$ at the current $m$ interpolation points. This leads to the following lower triangular linear system:
\[
\begin{bmatrix}
   1  & 0&\cdots & 0 & 0 \\
\tilde{\boldsymbol{v}}_{j_2}^1  & 1 &\cdots &0 &0 \\
  \vdots &\vdots & & \vdots & \vdots \\
  \tilde{\boldsymbol{v}}_{j_{\rm M_{EIM-1}}}^1  &  \tilde{\boldsymbol{v}}_{j_{\rm M_{EIM-1}}}^2
   &\cdots &1 &0 \\
    \tilde{\boldsymbol{v}}_{j_{\rm M_{EIM}}}^1 & \tilde{\boldsymbol{v}}_{j_{\rm M_{EIM}}}^2 &\cdots& \tilde{\boldsymbol{v}}_{j_{\rm M_{EIM}}}^{\rm M_{EIM}} &1
\end{bmatrix}
\begin{bmatrix}
     \Theta_1  \\
     \Theta_2    \\
      \vdots \\
       \Theta_{\rm{M_{EIM}-1}} \\
        \Theta_{\rm{M_{EIM}}} 
\end{bmatrix}
(\xi)=
\begin{bmatrix}       
     \Hat{\boldsymbol{v}}_{j_1}  \\
      \Hat{\boldsymbol{v}}_{j_2}\\
      \vdots \\
      \Hat{\boldsymbol{v}}_{j_{\rm M_{EIM}-1}}\\
  \Hat{\boldsymbol{v}}_{j_{\rm M_{EIM}}}
\end{bmatrix}(\xi).
\]
We choose 
\begin{subequations}
\begin{align}
\xi_{m+1} = \arg \max_{\xi \in \Xi_{\rm training}}\| \Hat{\boldsymbol{v}}(\xi)-\mathcal{I}_{m}[ \Hat{\boldsymbol{v}}(\xi)]\|_{L^{\infty}}.
\end{align}
\end{subequations}
The $(m+1)$-th interpolation point is then defined as 
$$x_{m+1}=\arg \max_{1\leq j \leq F} | r_j^{m+1}|$$ with $\boldsymbol{r}^{m+1}=\Hat{\boldsymbol{v}}(\xi_{m+1})-\mathcal{I}_{m}[ \Hat{\boldsymbol{v}}(\xi_{m+1})]$
and the corresponding basis vector is taken as 
$$ \tilde{\boldsymbol{v}}^{m+1}= \boldsymbol{r}^{m+1}/ r_{j_{m+1}}^{m+1}.$$
We repeat this procedure until a given tolerance $\epsilon_{\rm{EIM}}>0$ is reached, i.e.
$$ \max_{\xi \in \Xi_{\rm training}} \|\Hat{\boldsymbol{v}}(\xi)-\mathcal{I}_{m}[\Hat{\boldsymbol{v}}(\xi)]\|_{L^{\infty}} < \epsilon_{\rm{EIM}}. $$

\cleardoublepage  %Bonne numerotation dans toc
\phantomsection   %Liens corrects dans toc
\addcontentsline{toc}{chapter}{Bibliography}
{\small
\bibliographystyle{siamplain}
\bibliography{biblio}
}
\end{document}